\newtheorem{theorem}{Theorem}[section]
\newtheorem{lemma}[theorem]{Lemma}
\newtheorem{proposition}[theorem]{Proposition}
\newtheorem{example}[theorem]{Example}
\newtheorem{corollary}[theorem]{Corollary}
\newtheorem{definition}[theorem]{Definition}
\begin{document}

\title{\bf Laplacian eigenvalues of the zero divisor graph of the ring $\mathbb{Z}_{n}$}
\author{Sriparna Chattopadhyay\footnote{Supported by SERB NPDF scheme (File No. PDF/2017/000908), Department of Science and Technology, Government of India} \and Kamal Lochan Patra \and Binod Kumar Sahoo}
\maketitle
\begin{abstract}
We study the Laplacian eigenvalues of the zero divisor graph $\Gamma\left(\mathbb{Z}_{n}\right)$ of the ring $\mathbb{Z}_{n}$ and prove that $\Gamma\left(\mathbb{Z}_{p^t}\right)$ is Laplacian integral for every prime $p$ and positive integer $t\geq 2$. We also prove that the Laplacian spectral radius and the algebraic connectivity of $\Gamma\left(\mathbb{Z}_{n}\right)$ for most of the values of $n$ are, respectively, the largest and the second smallest eigenvalues of the vertex weighted Laplacian matrix of a graph which is defined on the set of proper divisors of $n$. The values of $n$ for which algebraic connectivity and vertex connectivity of $\Gamma\left(\mathbb{Z}_{n}\right)$ coincide are also characterized.\\

\noindent {\bf Key words:} Zero divisor graph, Algebraic connectivity, Laplacian spectral radius, Vertex connectivity \\
{\bf AMS subject classification.} 05C25, 05C50, 05C75
\end{abstract}

\section{Introduction}

Let $G$ be a finite simple graph with vertex set $V(G)=\{v_{1},v_{2},\ldots,v_{n}\}$. For $1\leq i\neq j\leq n$, we write $v_i\sim v_j$ if $v_i$ is adjacent to $v_j$ in $G$. The {\it adjacency matrix} of $G$ is the $n\times n$ matrix $A(G)=(a_{ij})$, where $a_{ij}=1$ or $0$ according as $v_i\sim v_j$ in $G$ or not. The {\it Laplacian matrix} $L(G)$ of $G$ is defined by $L(G):=D(G)-A(G)$, where $D(G)$ is the diagonal matrix of vertex degrees of $G$. The eigenvalues of $L(G)$ are called the {\it Laplacian eigenvalues} of $G$. Since $L(G)$ is a real, symmetric and positive semidefinite matrix, all its eigenvalues are real and nonnegative. Since the sum of the entries in each row of $L(G)$ is zero, the smallest eigenvalue of $L(G)$ is $0$ with corresponding eigenvector $\textbf{1}=[1,1,\ldots,1]^T$. The second smallest eigenvalue of $L(G)$, denoted by $\mu(G)$, is called the {\it algebraic connectivity} of $G$. Applying the Perron-Frobenius theorem to the matrix $(n-1)I-L(G)$, it follows that $\mu(G)$ is positive if and only if $G$ is connected. The largest eigenvalue of $L(G)$, denoted by $\lambda(G)$, is called the {\it Laplacian spectral radius} of $G$. Fiedler proved that $\lambda(G) = n -\mu(\overline{G})$ \cite[3.7($1^{\circ}$)]{fei}, where $\overline{G}$ denotes the complement graph of $G$. Characteristic polynomial of $L(G)$ is called the {\it Laplacian characteristic polynomial} of $G$.

The graph $G$ is called {\it Laplacian integral} if all the Laplacian eigenvalues of $G$ are integers. The {\it vertex connectivity} of $G$, denoted by $\kappa(G)$, is the minimum number of vertices which need to be removed from $V(G)$ so that the induced subgraph of $G$ on the remaining vertices is disconnected or has only one vertex. Fiedler proved that $\mu(G)\leq \kappa (G)$ for a noncomplete graph $G$ \cite[4.1]{fei}. For a complete graph $K_m$ on $m$ vertices, $\mu(G)=m=\kappa(G)+1$.

The {\it spectrum} of a square matrix $B$, denoted by $\sigma(B)$, is the multiset of all the eigenvalues of $B$. If $\mu_{1},\mu_{2},\ldots, \mu_{t}$  are the distinct eigenvalues of $B$ with respective multiplicities $m_{1},m_{2},\ldots, m_{t}$, then we shall denote the spectrum of $B$ by
$$\sigma(B)=\left\{\begin{matrix}
                \mu_{1} & \mu_{2} & \cdots & \mu_{t} \\
                m_{1} & m_{2} & \cdots & m_{t} \\
                \end{matrix}\right\}.$$
For a graph $G$, the spectrum of $L(G)$ is called the \textit{Laplacian spectrum} of $G$, which is denoted by $\sigma _{L}(G)$. The Laplaian spectrum of graphs have been widely studied in the literature, see \cite{mohar} and the references therein.

Let $R$ be a commutative ring with multiplicative identity $1\neq 0$. A nonzero element $x\in R$ is called a {\it zero divisor} of $R$ if there exist a nonzero element $y\in R$ such that $xy=0$. The notion of zero divisor graph of a commutative ring was first introduced by I. Beck in \cite{beck} and it was later modified by Anderson and Livingston in \cite{anderson} as the following. The {\it zero divisor graph} $\Gamma(R)$ of $R$ is the simple graph with vertex set consisting of the zero divisors of $R$, in which two distinct vertices $x$ and $y$ are adjacent if and only if $xy=0$. Note that $\Gamma(R)$ is the empty graph (that is, no vertex) if $R$ is an integral domain.

For a positive integer $n$, let $\mathbb{Z}_{n}$ denote the ring of integers modulo $n$. Different aspects of the zero divisor graph $\Gamma\left(\mathbb{Z}_n\right)$ of $\mathbb{Z}_{n}$ are studied in \cite{akbari, anderson, ju, mathew}. In this paper, we study the Laplacian eigenvalues of the zero divisor graph $\Gamma\left(\mathbb{Z}_{n}\right)$. In Section \ref{sec-srtucture}, we study the structure of $\Gamma\left(\mathbb{Z}_{n}\right)$ and prove that $\Gamma\left(\mathbb{Z}_{n}\right)$ is a generalized join of certain complete graphs and null graphs\footnote{By a {\it null graph} we mean a graph with no edges.}. In Section \ref{sec-L-spectrum}, we discuss the Laplacian spectrum of $\Gamma\left(\mathbb{Z}_{n}\right)$. In Section \ref{sec-integrality}, we prove that the graph $\Gamma\left(\mathbb{Z}_{p^t}\right)$ is Laplacian integral for every prime $p$ and positive integer $t\geq 2$. Finally, in Section \ref{sec-alg-con}, we study the algebraic connectivity and Laplacian spectral radius of $\Gamma\left(\mathbb{Z}_{n}\right)$. We characterize the values of $n$ for which algebraic connectivity and vertex connectivity of  $\Gamma\left(\mathbb{Z}_{n}\right)$ coincide. We also prove that the Laplacian spectral radius and the algebraic connectivity of $\Gamma\left(\mathbb{Z}_{n}\right)$ for most of the values of $n$ are, respectively, the largest and the second smallest eigenvalues of the vertex weighted Laplacian matrix of a graph which is defined on the set of proper divisors of $n$.

\section{$\Gamma(\mathbb{Z}_{n})$ as a generalized join graph}\label{sec-srtucture}

\subsection{Generalized join graphs}

For two graphs $G_{1}$ and $G_{2}$ with disjoint vertex sets, recall that the {\it join} $G_{1}\vee G_{2}$ of $G_{1}$ and $G_{2}$ is the graph obtained from the union of $G_{1}$ and $G_{2}$ by adding new edges from each vertex of $G_{1}$ to every vertex of $G_{2}$. The following is a generalization of the definition of join graph (which is called generalized composition graph in \cite{schwenk}).

\begin{definition}\label{gen-join}
Let $G$ be a graph on $k$ vertices with $V(G)=\{v_1,v_2,\ldots ,v_k\}$ and let $H_{1}, H_{2}, \ldots , H_{k}$ be $k$ pairwise disjoint graphs. The $G$-generalized join graph $G[H_{1}, H_{2}, \ldots , H_{k}]$ of $H_{1}, H_{2}, \ldots , H_{k}$ is the graph formed by replacing each vertex $v_i$ of $G$ by the graph $H_{i}$ and then joining each vertex of $H_{i}$ to every vertex of $H_{j}$ whenever $v_i\sim v_j$ in $G$.
\end{definition}

Note that if $G$ consists of two adjacent vertices only, then the $G$-generalized join graph $G[H_{1}, H_{2}]$ coincides with the usual join $H_{1}\vee H_{2}$ of $H_{1}$ and $H_{2}$.
The following lemma is useful for us.

\begin{lemma}\label{connected}
Let $G$ be a graph with $V(G)=\{v_1,v_2,\ldots ,v_k\}$ and let $H_{1}, H_{2}, \ldots , H_{k}$ be $k$ pairwise disjoint graphs. If $G$-generalized join graph $G[H_{1}, H_{2}, \ldots , H_{k}]$ is connected, then $G$ is connected. Conversely, if $k\geq 2$ and $G$ is connected, then $G[H_{1}, H_{2}, \ldots , H_{k}]$ is connected.
\end{lemma}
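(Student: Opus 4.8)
The plan is to prove both directions by analyzing paths in the generalized join graph and projecting them down to $G$ (and conversely lifting paths in $G$ to the join). Throughout, write $\mathcal{G} = G[H_1, H_2, \ldots, H_k]$ and note that for every vertex $x \in V(H_i)$ there is a well-defined ``block index'' $i$, so we have a natural surjection $\pi \colon V(\mathcal{G}) \to V(G)$ sending each vertex of $H_i$ to $v_i$.

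For the first implication, suppose $\mathcal{G}$ is connected; I want to show $G$ is connected. Take any two vertices $v_i, v_j \in V(G)$. Since $H_i$ and $H_j$ are nonempty (they are vertices-worth of the join, hence have at least one vertex — if some $H_i$ is allowed to be empty the statement should be read over the nonempty blocks, but I will assume each $H_i$ is a genuine graph on at least one vertex), pick $x \in V(H_i)$ and $y \in V(H_j)$. By connectedness of $\mathcal{G}$ there is a path $x = x_0, x_1, \ldots, x_m = y$ in $\mathcal{G}$. The key observation is that whenever $x_\ell \sim x_{\ell+1}$ in $\mathcal{G}$, either both lie in the same block $H_r$ (so $\pi(x_\ell) = \pi(x_{\ell+1})$), or they lie in distinct blocks $H_r, H_s$ with $v_r \sim v_s$ in $G$ (by the definition of the generalized join, edges between distinct blocks appear exactly when the corresponding vertices of $G$ are adjacent). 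Hence the sequence $\pi(x_0), \pi(x_1), \ldots, \pi(x_m)$ is a walk in $G$ from $v_i$ to $v_j$ (after deleting repetitions of consecutive equal terms), so $v_i$ and $v_j$ lie in the same component of $G$. Thus $G$ is connected.

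For the converse, assume $k \geq 2$ and $G$ is connected; I want $\mathcal{G}$ connected. First note that because $k \geq 2$ and $G$ is connected, every vertex $v_i$ of $G$ has a neighbour $v_j$ in $G$, so every vertex of $H_i$ is joined in $\mathcal{G}$ to every vertex of $H_j$; in particular $\mathcal{G}$ has no isolated vertices, and each block $H_i$ together with one neighbouring block $H_j$ induces a complete bipartite (in fact join) structure, which is connected. Now take arbitrary $x, y \in V(\mathcal{G})$, say $x \in V(H_i)$ and $y \in V(H_j)$. Since $G$ is connected, choose a path $v_i = v_{i_0}, v_{i_1}, \ldots, v_{i_m} = v_j$ in $G$. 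For consecutive $v_{i_\ell} \sim v_{i_{\ell+1}}$, every vertex of $H_{i_\ell}$ is adjacent in $\mathcal{G}$ to every vertex of $H_{i_{\ell+1}}$; since each $H_{i_\ell}$ is nonempty, pick $z_\ell \in V(H_{i_\ell})$ for $0 < \ell < m$, and set $z_0 = x$, $z_m = y$. Then $x = z_0, z_1, \ldots, z_m = y$ is a walk in $\mathcal{G}$ (consecutive terms lie in adjacent blocks, hence are adjacent), so $x$ and $y$ are connected in $\mathcal{G}$. Therefore $\mathcal{G}$ is connected.

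The only genuine subtlety — and the step I would flag as the main obstacle — is the role of the hypothesis $k \geq 2$ in the converse and the implicit nonemptiness of the blocks. If $k = 1$, then $G = K_1$ is connected but $\mathcal{G} = H_1$ need not be; and if some $H_i$ were empty, the projection argument and the walk-lifting argument both need care. I would therefore state explicitly at the start of the proof that each $H_i$ is assumed nonempty (consistent with Definition~\ref{gen-join}, since a vertex of $G$ is replaced by a graph), and observe that $k \geq 2$ plus connectedness of $G$ forces every $v_i$ to have a neighbour, which is exactly what makes every vertex of $\mathcal{G}$ reachable. Everything else is the routine ``edges project to edges-or-loops'' and ``$G$-paths lift blockwise'' bookkeeping sketched above.
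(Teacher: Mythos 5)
Your proof is correct and follows essentially the same route as the paper's: project a path of the join down to a walk in $G$ for one direction, and lift a path of $G$ blockwise (choosing one vertex per intermediate block) for the other. The only point to make explicit is the case $x,y$ in the same block $H_i$ with $x\neq y$, where your path-lifting degenerates (the path from $v_i$ to $v_j=v_i$ is trivial, so you cannot set $z_0=x$ and $z_m=y$); the paper handles this by routing through a neighbouring block $H_l$, which is precisely your earlier observation that $H_i$ together with an adjacent block induces a connected join --- just invoke it explicitly when $i=j$.
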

\begin{proof}
Suppose that $k\geq 2$ and $G$ is connected. Let $x$ and $y$ be two distinct vertices of $ G[H_{1}, \ldots , H_{k}]$ with $x\in V(H_{i})$ and $y\in V(H_{j})$. First assume that $i\neq j$. Then $v_i\neq v_j$. Let $v_i\sim v_{i_{1}}\sim v_{i_{2}}\sim \cdots \sim v_{i_{l}}\sim v_j$ be a path between $v_i$ and $v_j$ in $G$. Take a vertex $a_{i_{r}}\in V(H_{i_{r}})$ for $1\leq r\leq l$. Then $x\sim a_{i_{1}}\sim a_{i_{2}}\sim \cdots \sim a_{i_{l}}\sim y$ is a path between $x$ and $y$ in $G[H_{1}, H_{2}, \ldots , H_{k}]$. Now assume that $i=j$. Then $x,y\in V(H_{i})$. If $H_{i}$ is connected, then there is nothing to prove. Otherwise, since $k\geq 2$, consider a neighbour $v_l$ of $v_i$ in $G$. Then, for $a\in V(H_{l})$, $x\sim a\sim y$ is a path in $G[H_{1}, H_{2}, \ldots , H_{k}]$. So $G[H_{1}, H_{2}, \ldots , H_{k}]$ is connected.

Conversely, assume that $G[H_{1}, H_{2}, \ldots , H_{k}]$ is connected. Let $v_s$ and $v_t$ be two distinct vertices of $G$ (so $s\neq t$). Take $x\in V(H_{s})$ and $y\in V(H_{t})$. Let $x=x_{1}\sim x_{2}\sim x_{3}\sim \cdots \sim x_{l-1}\sim x_{l}=y$ be a shortest path between $x$ and $y$ in $G[H_{1}, H_{2}, \ldots , H_{k}]$. Then observe that no two vertices of $x=x_{1}, x_{2}, \ldots ,x_{l-1}, x_{l}=y$ are in the same vertex set $V(H_r)$ for any $r\in\{1,2,\ldots,k\}$. For $2\leq j\leq l-1$, assuming that $x_j\in V(H_{i_j})$ for some $i_j\in \{1,2,\ldots,k\}\setminus\{s,t\}$, we can see that $v_s\sim v_{i_{2}}\sim v_{i_{3}}\sim \cdots \sim v_{i_{l-1}}\sim v_t$ is a path between $v_s$ and $v_t$ in $G$. So $G$ is connected.
\end{proof}

\subsection{Structure of $\Gamma(\mathbb{Z}_{n})$}

For two integers $s, t$, the greatest common divisor of $s$ and $t$ is denoted by $(s,t)$. Throughout the paper, we denote the elements of the ring $\mathbb{Z}_{n}$ by $0,1,2,\cdots ,n-1$. To avoid triviality of $\Gamma(\mathbb{Z}_{n})$ being an empty graph, we assume that $n\neq 1$ and that $\mathbb{Z}_{n}$ is not an integral domain. So $n\geq 4$ and $n$ is not a prime. A nonzero element $x$ of $\mathbb{Z}_n$ is called a {\it unit} if $xy=1$ for some element $y\in \mathbb{Z}_n$. Any nonzero element $a$ of $\mathbb{Z}_{n}$ is either a unit or a zero divisor according as $(a,n)=1$ or not.  The number of vertices in $\Gamma(\mathbb{Z}_{n})$ is $n-\phi (n)-1$, where $\phi$ is the Euler's totient function.

An integer $d$ is called a {\it proper divisor} of $n$ if $1<d<n$ and $d\mid n$. Let $d_{1},d_{2},\cdots ,d_{k}$ be the distinct proper divisors of $n$. For $1\leq i\leq k$, we define the following sets:
$$A_{d_i}=\{ x\in\mathbb{Z}_{n}: (x,n)=d_i\}.$$
The sets $A_{d_1},A_{d_2},\ldots, A_{d_k}$ are pairwise disjoint and we can partition the vertex set of $\Gamma(\mathbb{Z}_{n})$ as
$$V(\Gamma(\mathbb{Z}_{n}))=A_{d_{1}}\cup A_{d_{2}}\cup \ldots \cup A_{d_{k}}.$$
The following result is proved in \cite[Proposition 2.1]{mathew}.

\begin{lemma}\cite{mathew}\label{mathew}
$|A_{d_i}|=\phi\left(\frac{n}{d_i}\right)$ for $1\leq i\leq k$.
\end{lemma}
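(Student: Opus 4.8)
The plan is to set up a bijection between $A_{d_i}$ and the set of integers in $\{1, 2, \ldots, \frac{n}{d_i}\}$ that are coprime to $\frac{n}{d_i}$, whose cardinality is $\phi\!\left(\frac{n}{d_i}\right)$ by definition of the Euler totient. Write $m = \frac{n}{d_i}$, so $n = d_i m$. The key observation is that an element $x \in \mathbb{Z}_n$ satisfies $(x, n) = d_i$ precisely when $d_i \mid x$ and the quotient $\frac{x}{d_i}$ shares no common factor with $m$; in other words, $x = d_i y$ for some $y$ with $1 \leq y \leq m$ and $(y, m) = 1$. First I would verify the forward direction: if $(x,n) = d_i$ then $d_i \mid x$, so $x = d_i y$ with $1 \le y \le m$ (the range follows from $0 < x < n = d_i m$; note $x \neq 0$ since $d_i < n$), and then $(y, m) = \left(\frac{x}{d_i}, \frac{n}{d_i}\right) = \frac{(x,n)}{d_i} = 1$, using the standard fact that $(as, at) = a(s,t)$ for positive integers.

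Conversely, given $y$ with $1 \le y \le m$ and $(y,m) = 1$, set $x = d_i y$. Then $1 \le x \le n$, and $x \neq n$ because $y = m$ would force $(y,m) = m > 1$ (recall $d_i$ is a proper divisor, so $m = \frac{n}{d_i} > 1$); hence $x$ represents a genuine element of $\mathbb{Z}_n$ lying strictly between $0$ and $n$. Again by $(as,at) = a(s,t)$ we get $(x, n) = (d_i y, d_i m) = d_i (y, m) = d_i$, so $x \in A_{d_i}$. The map $y \mapsto d_i y$ is clearly injective on this range, and the two directions show it is a bijection from $\{\, y : 1 \le y \le m,\ (y,m) = 1 \,\}$ onto $A_{d_i}$. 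Therefore $|A_{d_i}| = \phi(m) = \phi\!\left(\frac{n}{d_i}\right)$.

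I do not anticipate a real obstacle here; the only point requiring a little care is the bookkeeping at the boundary of the range (ensuring that the representatives $x$ and the quotients $y$ stay in the intended intervals and that $x = 0$ and $x = n$ are correctly excluded), together with invoking the elementary gcd identity $(as, at) = a\,(s,t)$. Since the statement is quoted from \cite[Proposition 2.1]{mathew}, one could alternatively just cite it, but the bijective argument above is short and self-contained.
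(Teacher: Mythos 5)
Your proof is correct and complete: the bijection $y\mapsto d_i y$ between $\{\,y: 1\le y\le \tfrac{n}{d_i},\ (y,\tfrac{n}{d_i})=1\,\}$ and $A_{d_i}$, justified via the identity $(d_i y, d_i m)=d_i(y,m)$ and with the boundary cases $x=0$ and $x=n$ properly excluded, is exactly the standard argument. The paper itself gives no proof of this lemma (it only cites \cite[Proposition 2.1]{mathew}), and your argument is the one that proof amounts to, so there is no substantive difference in approach.
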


Note that any element $x$ of $A_{d_i}$ can be written as $x=m_xd_i$ for some integer $m_x$ with $0<m_x<\frac{n}{d_i}$ and $(m_x,\frac{n}{d_i})=1$. The following lemma describes adjacency of vertices in $\Gamma(\mathbb{Z}_{n})$.

\begin{lemma}\label{struc}
For $i,j\in\{1,2,\ldots, k\}$, a vertex of $A_{d_i}$ is adjacent to a vertex of $A_{d_j}$ in $\Gamma(\mathbb{Z}_{n})$ if and only if $n$ divides $d_id_j$.
\end{lemma}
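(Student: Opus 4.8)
The plan is to argue directly from the definitions of the sets $A_{d_i}$ and the adjacency rule in $\Gamma(\mathbb{Z}_n)$. Take $x \in A_{d_i}$ and $y \in A_{d_j}$, so we may write $x = m_x d_i$ and $y = m_y d_j$ with $(m_x, n/d_i) = 1$ and $(m_y, n/d_j) = 1$. By definition, $x$ and $y$ are adjacent in $\Gamma(\mathbb{Z}_n)$ precisely when $xy \equiv 0 \pmod n$, i.e.\ when $n \mid m_x m_y d_i d_j$. So the whole statement reduces to showing that, under the stated constraints on $m_x$ and $m_y$, the divisibility $n \mid m_x m_y d_i d_j$ holds if and only if $n \mid d_i d_j$.

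The direction ``$n \mid d_i d_j$ implies adjacency'' is immediate: if $n \mid d_i d_j$ then certainly $n \mid m_x m_y d_i d_j$ for any integers $m_x, m_y$, so every vertex of $A_{d_i}$ is adjacent to every vertex of $A_{d_j}$ (with the usual caveat that we are only speaking of distinct vertices when $i=j$, but that does not affect the biconditional). The substantive direction is the converse: assume $n \mid m_x m_y d_i d_j$ and deduce $n \mid d_i d_j$. Here I would work prime by prime. Let $p$ be a prime with $p^a \,\|\, n$ (exact power), $p^{b} \,\|\, d_i$, $p^{c} \,\|\, d_j$; I must show $a \le b + c$. The coprimality conditions are the key leverage: since $(m_x, n/d_i) = 1$ and $p^{a-b} \,\|\, n/d_i$, we get $p \nmid m_x$ whenever $a - b \ge 1$, and similarly $p \nmid m_y$ whenever $a - c \ge 1$. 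The hard part — really the only point requiring care — is bookkeeping these cases: if $a \le b$ or $a \le c$ we are trivially done, so assume $a > b$ and $a > c$; then $p \nmid m_x m_y$, hence the exact power of $p$ dividing $m_x m_y d_i d_j$ is exactly $p^{b+c}$, and $n \mid m_x m_y d_i d_j$ forces $a \le b + c$. Since $p$ was arbitrary, $n \mid d_i d_j$.

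I would then remark that the main obstacle is purely in organizing the prime-exponent comparison cleanly; there is no deep idea, only the observation that the ``witness'' element $m_x$ of $A_{d_i}$ is built to be coprime to exactly the part of $n$ not already absorbed by $d_i$, so it cannot help a product reach a multiple of $n$ at any prime where $d_i$ is already ``maxed out'' relative to $n$. An alternative, slightly slicker phrasing avoids choosing representatives: note that $n \mid d_i d_j$ iff for the representative classes the product of \emph{any} $x \in A_{d_i}$ with \emph{any} $y \in A_{d_j}$ is $0$, and conversely if $n \nmid d_i d_j$ one can exhibit specific $m_x, m_y$ (e.g.\ $m_x = m_y = 1$ when these lie in the right residue classes, or adjust by units) showing non-adjacency; but the case analysis above is more transparent, so I would present that.
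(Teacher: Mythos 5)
Your proposal is correct and follows essentially the same route as the paper: the same representation $x=m_xd_i$, $y=m_yd_j$, the same reduction of adjacency to the condition $n\mid m_xm_yd_id_j$, and the same essential use of the coprimalities $(m_x,\tfrac{n}{d_i})=1=(m_y,\tfrac{n}{d_j})$. The only difference is the final arithmetic step: the paper cancels $m_x$ and then $m_y$ through a chain of divisibility equivalences, $n\mid m_xm_yd_id_j \Leftrightarrow \tfrac{n}{d_i}\mid m_xm_yd_j \Leftrightarrow \tfrac{n}{d_i}\mid m_yd_j \Leftrightarrow \cdots \Leftrightarrow n\mid d_id_j$, whereas you compare prime exponents with a short case analysis; both rest on exactly the same facts, so yours is a cosmetic variant rather than a genuinely different method.
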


\begin{proof}
Let $x\in A_{d_i}$ and $y\in A_{d_j}$. Then $x=m_xd_i$ and $y=m_yd_j$ for some integers $m_x,m_y$ with $0<m_x<\frac{n}{d_i}$, $0<m_y<\frac{n}{d_j}$ and $(m_x,\frac{n}{d_i})=1=(m_y,\frac{n}{d_j})$. The vertices $x$ and $y$ are adjacent in $\Gamma(\mathbb{Z}_{n})$ if and only if $n$ divides $xy$, that is, if and only if $n$ divides $m_xm_yd_id_j$.
Since $(m_x,\frac{n}{d_i})=1=(m_y,\frac{n}{d_j})$, we have
$$n|m_xm_yd_id_j\Leftrightarrow \frac{n}{d_i}|m_xm_yd_j\Leftrightarrow\frac{n}{d_i}|m_yd_j\Leftrightarrow n|m_yd_id_j\Leftrightarrow \frac{n}{d_j}|m_yd_i\Leftrightarrow\frac{n}{d_j}|d_i\Leftrightarrow n|d_id_j.$$
This completes the proof.
\end{proof}

As a consequence of Lemmas \ref{mathew} and \ref{struc}, we have the following.

\begin{corollary}\label{struc-coro}
The following hold:
\begin{enumerate}
\item[(i)] For $i\in\{1,2,\ldots, k\}$, the induced subgraph $\Gamma(A_{d_i})$ of $\Gamma(\mathbb{Z}_{n})$ on the vertex set $A_{d_i}$ is either the complete graph $K_{\phi\left(\frac{n}{d_i}\right)}$ or its complement graph $\overline{K}_{\phi\left(\frac{n}{d_i}\right)}$. Indeed, $\Gamma(A_{d_i})$ is $K_{\phi\left(\frac{n}{d_i}\right)}$ if and only if $n$ divides $d_i^2$.
\item[(ii)] For $i,j\in\{1,2,\ldots, k\}$ with $i\neq j$, a vertex of $A_{d_{i}}$ is adjacent to either all or none of the vertices of $A_{d_{j}}$ in $\Gamma(\mathbb{Z}_{n})$.
\end{enumerate}
\end{corollary}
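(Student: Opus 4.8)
The plan is to derive both statements directly from Lemmas \ref{mathew} and \ref{struc}, since the corollary is essentially a repackaging of those two results specialized to the cases $i=j$ and $i\neq j$.

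First I would prove (i). Fix $i\in\{1,2,\ldots,k\}$ and consider the induced subgraph $\Gamma(A_{d_i})$. By Lemma \ref{mathew}, this graph has $\phi\!\left(\frac{n}{d_i}\right)$ vertices. Apply Lemma \ref{struc} with $j=i$: two (distinct) vertices of $A_{d_i}$ are adjacent in $\Gamma(\mathbb{Z}_n)$ if and only if $n$ divides $d_i^2$. The point to note is that this adjacency condition depends only on $d_i$ and $n$, not on the particular pair of vertices chosen. Hence either every pair of distinct vertices of $A_{d_i}$ is adjacent, in which case $\Gamma(A_{d_i})=K_{\phi\left(\frac{n}{d_i}\right)}$, or no pair is adjacent, in which case $\Gamma(A_{d_i})=\overline{K}_{\phi\left(\frac{n}{d_i}\right)}$; and the first alternative occurs precisely when $n\mid d_i^2$. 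This gives (i).

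For (ii), fix $i\neq j$ and a vertex $x\in A_{d_i}$. Again invoke Lemma \ref{struc}: a vertex $y\in A_{d_j}$ is adjacent to $x$ in $\Gamma(\mathbb{Z}_n)$ if and only if $n\mid d_id_j$. Since this condition involves only $d_i$, $d_j$ and $n$ and makes no reference to the specific vertices $x$ and $y$, it holds either for all pairs $(x,y)\in A_{d_i}\times A_{d_j}$ or for none. In particular, the chosen $x$ is adjacent either to all vertices of $A_{d_j}$ (when $n\mid d_id_j$) or to none of them (when $n\nmid d_id_j$), which is exactly statement (ii).

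I do not anticipate any real obstacle here: the content of the corollary is entirely contained in the observation that the divisibility criterion of Lemma \ref{struc} is a property of the pair of divisor-classes rather than of individual vertices. The only minor point worth a sentence is, in (i), the remark that adjacency in $\Gamma(\mathbb{Z}_n)$ is defined only between \emph{distinct} vertices, so one should phrase the dichotomy in terms of distinct pairs before concluding that $\Gamma(A_{d_i})$ is either complete or edgeless.
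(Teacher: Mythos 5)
Your proof is correct and follows exactly the route the paper intends: the paper states this corollary without proof, as an immediate consequence of Lemmas \ref{mathew} and \ref{struc}, and your argument simply makes explicit that the divisibility criterion in Lemma \ref{struc} depends only on the divisor classes (applied with $j=i$ for part (i) and $j\neq i$ for part (ii)). Nothing further is needed.
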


The above corollary implies that the partition $A_{d_{1}}\cup A_{d_{2}}\cup \cdots \cup A_{d_{k}}$ of the vertex set $V(\Gamma(\mathbb{Z}_{n}))$ of $\Gamma(\mathbb{Z}_{n})$ is an {\it equitable partition} \cite[p.83]{cvet}, that is, every vertex in $A_{d_i}$ has the same number of neighbors in $A_{d_j}$ for all $i,j \in \{1,2,\ldots , k\}$.

Denote by $\Upsilon_n$ the simple graph with vertices the proper divisors $d_1,d_2,\ldots ,d_k$ of $n$, in which two distinct vertices $d_i$ and $d_j$ are adjacent if and only if $n$ divides $d_id_j$.
If $n=p_1^{n_1}p_2^{n_2}\cdots p_r^{n_r}$ is the prime power factorization of $n$, where $r, n_1,n_2,\ldots, n_r$ are positive integers and $p_1,p_2,\ldots,p_r$ are distinct prime numbers, then the number of vertices of $\Upsilon_n$ is given by:
$$|V(\Upsilon_n)|=\underset{i=1}{\overset{r}\prod} (n_i+1)-2.$$
The graph $\Upsilon_n$ shall play an important role in the rest of the paper.

\begin{lemma}\label{UP}
$\Upsilon_n$ is a connected graph.
\end{lemma}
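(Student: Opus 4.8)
The plan is to identify a connected ``core'' inside $\Upsilon_n$ to which every other vertex attaches. For each prime $p$ dividing $n$, the integer $n/p$ satisfies $1 < n/p < n$ (because $n$ is composite), so it is a proper divisor of $n$ and hence a vertex of $\Upsilon_n$; I will call these vertices the \emph{hubs}.

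First I would check that the hubs induce a connected subgraph of $\Upsilon_n$. If $p\neq q$ are primes dividing $n$, then $pq\mid n$, hence $n\mid (n/p)(n/q)$, so $n/p\sim n/q$ in $\Upsilon_n$ by definition; thus the hubs form a clique (and when $n$ is a prime power there is a single hub, which is trivially a connected subgraph). Next I would show that every proper divisor $d$ of $n$ which is not a hub is adjacent to some hub: pick a prime $p\mid d$; since $d$ is not a hub we have $d\neq n/p$, and writing $d=pd'$ gives $d\,(n/p)=d'n$, so $n\mid d\,(n/p)$ and therefore $d\sim n/p$. Combining the two steps, every vertex of $\Upsilon_n$ either lies in the hub clique or is adjacent to a vertex of it, so $\Upsilon_n$ is connected.

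I do not foresee a serious obstacle here; the only care needed is with the degenerate situations — $n$ a prime power, where the core is a single vertex, and the extreme case $|V(\Upsilon_n)|=1$ (e.g.\ $n=p^2$), which is connected by convention — together with the bookkeeping that the adjacency $d\sim n/p$ is asserted exactly when $d\neq n/p$, which is precisely what ``$d$ is not a hub'' guarantees. An alternative, less self-contained, argument would combine Lemma \ref{connected} with the classical fact that $\Gamma(\mathbb{Z}_n)$ is connected, but the divisor argument above is shorter and avoids invoking external results.
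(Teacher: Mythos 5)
Your proof is correct, but it is organized differently from the paper's. The paper argues pairwise: given two vertices $d_i$ and $d_j$, it sets $l=(d_i,d_j)$ and exhibits the explicit path $d_i\sim \frac{n}{l}\sim d_j$ when $l\neq 1$, and $d_i\sim \frac{n}{d_i}\sim \frac{n}{d_j}\sim d_j$ when $l=1$ (using that coprime divisors of $n$ have product dividing $n$). You instead single out the ``hub'' vertices $n/p$ for primes $p\mid n$, check they form a clique, and check that every non-hub vertex is adjacent to some hub, i.e.\ you produce a dominating clique; connectedness follows at once. Both proofs rest on the same one-line divisibility fact ($e\mid d$ implies $n\mid d\cdot\frac{n}{e}$), and both implicitly give diameter at most $3$ for $\Upsilon_n$. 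What your version buys is a cleaner treatment of the degenerate coincidences: you explicitly note that $d\sim n/p$ requires $d\neq n/p$, which is exactly what ``$d$ is not a hub'' guarantees, whereas the paper's pairwise argument silently allows $\frac{n}{l}$ to coincide with $d_i$ or $d_j$ (in which case one must observe that $d_i$ and $d_j$ are then adjacent directly, since $l\mid d_j$ and $n=l\,d_i$ give $n\mid d_id_j$). What the paper's version buys is that it needs no distinguished core and handles an arbitrary pair in one step of case analysis on the gcd. Either argument is acceptable; your alternative remark about deducing the lemma from Lemma~\ref{connected} and connectedness of $\Gamma(\mathbb{Z}_n)$ should indeed be avoided here, since the paper uses Lemma~\ref{UP} (together with Lemma~\ref{connected}) to prove that $\Gamma(\mathbb{Z}_n)$ is connected, and reversing the dependence would be circular in this exposition.
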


\begin{proof}
Consider two vertices $d_i$ and $d_j$ of $\Upsilon_n$ with $i\neq j$ and let $(d_i,d_j)=l$. If $l\neq 1$, then $\frac{n}{l}$ is a vertex of $\Upsilon_n$ and $d_i\sim\frac{n}{l}\sim d_j$ in $\Upsilon_n$. If $l=1$, then $d_id_j$ divides $n$ and so $d_i\sim\frac{n}{d_i}\sim\frac{n}{d_j}\sim d_j$ in $\Upsilon_n$. So $\Upsilon_n$ is connected.
\end{proof}

The following lemma says that $\Gamma(\mathbb{Z}_{n})$ is a generalized join of certain complete graphs and null graphs.

\begin{lemma}\label{divisorgraph}
Let $\Gamma(A_{d_{i}})$ be the induced subgraph of $\Gamma(\mathbb{Z}_{n})$ on the vertex set $A_{d_i}$ for $1\leq i\leq k$. Then
$\Gamma(\mathbb{Z}_{n})=\Upsilon_n[\Gamma(A_{d_{1}}),\Gamma(A_{d_{2}}),\cdots ,\Gamma(A_{d_{k}})].$
\end{lemma}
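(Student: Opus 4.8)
The plan is to verify directly from the definition of the generalized join graph (Definition~\ref{gen-join}) that both graphs have the same vertex set and the same edge set. The vertex set of $\Gamma(\mathbb{Z}_n)$ is partitioned as $A_{d_1}\cup A_{d_2}\cup\cdots\cup A_{d_k}$, and by construction the vertex set of $\Upsilon_n[\Gamma(A_{d_1}),\ldots,\Gamma(A_{d_k})]$ is obtained by replacing each vertex $d_i$ of $\Upsilon_n$ by the graph $\Gamma(A_{d_i})$, whose vertex set is exactly $A_{d_i}$; hence the two vertex sets coincide.

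For the edges, I would argue in two cases depending on whether the two endpoints lie in the same part $A_{d_i}$ or in different parts. First, for $x,y\in A_{d_i}$ with $x\neq y$: in the generalized join, $x$ and $y$ are adjacent precisely when they are adjacent in $\Gamma(A_{d_i})$, which is the induced subgraph of $\Gamma(\mathbb{Z}_n)$ on $A_{d_i}$, so the adjacency agrees with that of $\Gamma(\mathbb{Z}_n)$ by the very definition of induced subgraph. (One can also invoke Corollary~\ref{struc-coro}(i) here, but it is not strictly needed.) Second, for $x\in A_{d_i}$ and $y\in A_{d_j}$ with $i\neq j$: in the generalized join, $x\sim y$ if and only if $d_i\sim d_j$ in $\Upsilon_n$; by the definition of $\Upsilon_n$ this happens if and only if $n$ divides $d_id_j$; and by Lemma~\ref{struc} this is exactly the condition for a vertex of $A_{d_i}$ to be adjacent to a vertex of $A_{d_j}$ in $\Gamma(\mathbb{Z}_n)$. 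Since the generalized join construction makes adjacency across distinct parts ``all or nothing'' (every vertex of $H_i$ joined to every vertex of $H_j$ when $v_i\sim v_j$), and Corollary~\ref{struc-coro}(ii) tells us $\Gamma(\mathbb{Z}_n)$ has the same ``all or nothing'' behaviour across the parts $A_{d_i}$, the two edge sets match.

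Combining the two cases, an arbitrary pair of distinct vertices is adjacent in $\Gamma(\mathbb{Z}_n)$ if and only if it is adjacent in $\Upsilon_n[\Gamma(A_{d_1}),\ldots,\Gamma(A_{d_k})]$, so the graphs are equal. I do not anticipate a genuine obstacle here: the statement is essentially a repackaging of Lemma~\ref{struc} and Corollary~\ref{struc-coro} in the language of Definition~\ref{gen-join}. The only point requiring a little care is making sure the bookkeeping of the partition is stated cleanly — namely that the parts $A_{d_i}$ are nonempty (which follows from Lemma~\ref{mathew}, since $\phi(n/d_i)\geq 1$) and pairwise disjoint, so that replacing $d_i$ by $\Gamma(A_{d_i})$ yields pairwise disjoint graphs as required by the definition of the generalized join.
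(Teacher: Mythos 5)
Your proof is correct and follows essentially the same route as the paper, which simply says to replace each vertex $d_i$ of $\Upsilon_n$ by $\Gamma(A_{d_i})$ and observe via Lemma~\ref{struc} that the adjacencies agree; you have merely written out in full the vertex-set identification and the within-part/between-part edge comparison that the paper leaves implicit.
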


\begin{proof}
Replace the vertex $d_i$ of $\Upsilon_n$ by $\Gamma(A_{d_{i}})$ for $1\leq i\leq k$. Then the result can be seen using Lemma \ref{struc}.
\end{proof}

\begin{corollary}
$\Gamma(\mathbb{Z}_{n})$ is connected.
\end{corollary}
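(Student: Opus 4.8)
The plan is to combine the two structural facts already established in the excerpt. By Lemma \ref{divisorgraph}, we have the identification $\Gamma(\mathbb{Z}_{n})=\Upsilon_n[\Gamma(A_{d_{1}}),\Gamma(A_{d_{2}}),\cdots ,\Gamma(A_{d_{k}})]$, so $\Gamma(\mathbb{Z}_{n})$ is a $\Upsilon_n$-generalized join of the $k$ pairwise disjoint graphs $\Gamma(A_{d_1}),\ldots,\Gamma(A_{d_k})$. By Lemma \ref{UP}, the graph $\Upsilon_n$ is connected. Thus, to apply the converse direction of Lemma \ref{connected}, I only need to know that $k\geq 2$, i.e.\ that $n$ has at least two proper divisors.

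The first step is therefore to check that $k\geq 2$. Recall the standing assumption that $n\geq 4$ and $n$ is not prime, so $n$ admits a prime factorization $n=p_1^{n_1}\cdots p_r^{n_r}$ with $\sum n_i\geq 2$. The formula $|V(\Upsilon_n)|=\prod_{i=1}^{r}(n_i+1)-2$ recorded just before Lemma \ref{UP} then gives $k=|V(\Upsilon_n)|\geq 2\cdot 2-2=2$ when there are two or more prime factors, and $k\geq (n_1+1)-2=n_1-1\geq 1$ when $r=1$; in the latter case $n_1\geq 2$ forces $n_1-1\geq 1$, and in fact $n_1\geq 2$ alone does not immediately give $k\geq 2$, so I should note that $n=p^2$ still has the single proper divisor $p$, making $k=1$. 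Hence a genuinely separate (and easy) argument is needed for $n=p^2$: here $\Gamma(\mathbb{Z}_{p^2})$ has vertex set $A_p=\{p,2p,\ldots,(p-1)p\}$ and by Corollary \ref{struc-coro}(i) it is the complete graph $K_{p-1}$ (since $p^2\mid p^2$), which is connected. For all other admissible $n$ one has $k\geq 2$.

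The second step, once $k\geq 2$ is in hand, is a one-line invocation: $\Upsilon_n$ is connected by Lemma \ref{UP}, and $k\geq 2$, so the converse part of Lemma \ref{connected} yields that $\Upsilon_n[\Gamma(A_{d_{1}}),\ldots,\Gamma(A_{d_{k}})]=\Gamma(\mathbb{Z}_{n})$ is connected.

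There is essentially no obstacle here; the only thing to be careful about is the bookkeeping on the number of proper divisors, namely separating off the case $n=p^2$ (where $k=1$ and Lemma \ref{connected} does not apply) and handling it directly via Corollary \ref{struc-coro}(i). Everything else is immediate from Lemmas \ref{UP}, \ref{connected}, and \ref{divisorgraph}.
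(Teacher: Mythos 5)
Your proof is correct and follows essentially the same route as the paper: apply Lemmas \ref{divisorgraph}, \ref{UP} and the converse part of Lemma \ref{connected} when $n$ has at least two proper divisors, and treat the single exceptional case $n=p^2$ separately via Corollary \ref{struc-coro}(i), where $\Gamma(\mathbb{Z}_{p^2})=\Gamma(A_p)$ is the complete graph $K_{p-1}$.
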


\begin{proof}
If $n$ has at least two proper divisors, then $\vert V(\Upsilon_n)\vert\geq 2$ and so the corollary follows from Lemmas \ref{connected}, \ref{UP} and \ref{divisorgraph}. If $n$ has exactly one proper divisor, then $n=p^2$ for some prime $p$. In this case, $\Gamma\left(\mathbb{Z}_{p^2}\right)=\Gamma(A_p)$ has $p-1$ vertices and $\Gamma(A_p)$ is a complete graph by Corollary \ref{struc-coro}(i).
\end{proof}

We note that the above corollary also follows from a more general result by Anderson and Livingston in \cite[Theorem 2.3]{anderson} which says that the zero divisor graph of any commutative ring with multiplicative identity is connected.

\begin{corollary}\label{z-n-complete}
$\Gamma(\mathbb{Z}_{n})$ is a complete graph if and only if $n=p^2$ for some prime $p$.
\end{corollary}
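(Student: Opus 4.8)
The plan is to prove the two implications of the corollary separately, leaning on the structural description in Corollary~\ref{struc-coro}(i). The implication ``$n=p^{2}\Rightarrow\Gamma(\mathbb{Z}_{n})$ is complete'' is essentially already contained in the proof of the preceding corollary: $p$ is the unique proper divisor of $p^{2}$, so $\Gamma(\mathbb{Z}_{p^{2}})=\Gamma(A_{p})$, and since $p^{2}\mid p^{2}$, Corollary~\ref{struc-coro}(i) gives $\Gamma(A_{p})=K_{\phi(p)}=K_{p-1}$, which is complete. So the content is in the converse.

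For the converse, suppose $\Gamma(\mathbb{Z}_{n})$ is complete; I want to conclude $n=p^{2}$ for some prime $p$. If $n=4$ there is nothing to prove, so I may assume $n\neq 4$. Let $p$ be the smallest prime divisor of $n$. Since $n$ is composite we have $p<n$, so $p$ is a proper divisor of $n$ and $A_{p}$ is one of the parts of the (equitable) partition of $V(\Gamma(\mathbb{Z}_{n}))$. The first step is to check that $|A_{p}|\geq 2$: by Lemma~\ref{mathew}, $|A_{p}|=\phi(n/p)$, and $|A_{p}|=1$ would force $n/p\in\{1,2\}$; as $n$ is composite, $n/p\neq 1$, hence $n=2p$, and then $p$ being the smallest prime divisor of $2p$ forces $p=2$, i.e.\ $n=4$, contrary to assumption.

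Granting $|A_{p}|\geq 2$, the main step is short. The induced subgraph $\Gamma(A_{p})$ of the complete graph $\Gamma(\mathbb{Z}_{n})$ is itself complete, and since it has at least two vertices it contains an edge, so it is \emph{not} a null graph. By Corollary~\ref{struc-coro}(i) this forces $n\mid p^{2}$. As the positive divisors of $p^{2}$ are $1,p,p^{2}$ and $n$ is composite, we conclude $n=p^{2}$, as desired.

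I do not expect any genuine obstacle; the one point that needs a moment's care is the degenerate case $|A_{p}|=1$, where $\Gamma(A_{p})=K_{1}=\overline{K}_{1}$ and Corollary~\ref{struc-coro}(i) carries no information — which is precisely why the argument disposes of it at the outset by showing it forces $n=4$. Beyond that, the proof reduces to Lemma~\ref{mathew}, Corollary~\ref{struc-coro}(i), and the trivial fact that an induced subgraph of a complete graph is complete.
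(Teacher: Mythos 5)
Your proof is correct and follows essentially the same route as the paper: the forward direction is immediate from Corollary~\ref{struc-coro}(i), and for the converse you take a prime divisor $p$ of $n$, observe that the induced subgraph $\Gamma(A_{p})$ of the complete graph $\Gamma(\mathbb{Z}_{n})$ is complete, and invoke Corollary~\ref{struc-coro}(i) to obtain $n\mid p^{2}$ and hence $n=p^{2}$. The only difference is that you explicitly dispose of the degenerate case $|A_{p}|=1$ (which forces $n=2p$ and then $n=4$), a point the paper's shorter proof passes over silently; this is a sensible extra precaution but not a change of method.
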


\begin{proof}
If $n=p^2$ for some prime $p$, then $\Gamma(\mathbb{Z}_{n})=\Gamma(A_p)$ is the complete graph $K_{\phi(p)}$ by Corollary \ref{struc-coro}(i). Conversely, assume that $\Gamma(\mathbb{Z}_{n})$ is a complete graph. If $p$ is a prime divisor of $n$, then $\Gamma(A_p)$ must be a complete graph. So $n\mid p^2$ by Corollary \ref{struc-coro}(i) and it follows that $n=p^2$.
\end{proof}

\begin{example}
The zero divisor graph $\Gamma(\mathbb{Z}_{18})$ of $\mathbb{Z}_{18}$ is shown in Figure 1. Here $V(\Upsilon_{18})=\{2,3,6,9\}$ and $\Upsilon_{18}$ is the path $P_{4}: 2\sim 9 \sim 6 \sim 3$. By Lemma \ref{divisorgraph}, we have
$$\Gamma(\mathbb{Z}_{18})=\Upsilon_{18}[\Gamma(A_{2}),\Gamma(A_{3}),\Gamma(A_{6}),\Gamma(A_{9})],$$
where $\Gamma(A_{2})=\overline{K}_{6}$, $\Gamma(A_{3})=\overline{K}_{2}$, $\Gamma(A_{6})=K_{2}$ and $\Gamma(A_{9})$ is an isolated vertex. In Figure 1, the dotted lines between two circles mean that each vertex in one circle is adjacent to every vertex in the other circle.
\begin{figure}[!h]
\centering
\includegraphics[width=3.0in]{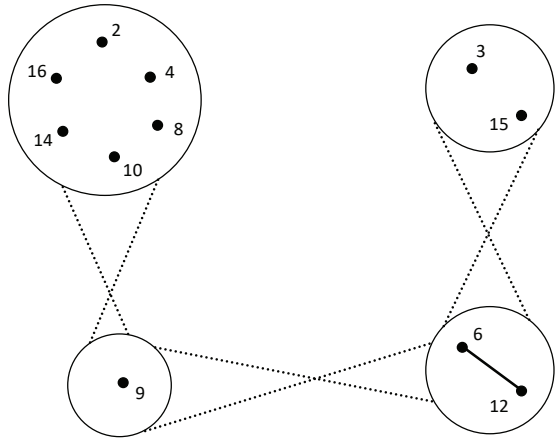}
\caption{Zero divisor graph of $\mathbb{Z}_{18}$}
\end{figure}
\end{example}

\section{Laplacian Spectrum}\label{sec-L-spectrum}

For a vertex $v$ of a graph $G$, $N_G(v)$ denotes the {\it neighbourhood} of $v$ in $G$, that is, the set of vertices of $G$ which are adjacent to $v$ in $G$.

\subsection{Laplacian spectrum of generalized join graphs}\label{sec-join-graph}

The following theorem was proved in \cite[Theorem 8]{cardoso} by Cardoso et al., in which the Laplacian spectrum of a generalized join graph $G[H_{1}, H_{2}, \ldots , H_{k}]$ is expressed in terms of the Laplacian spectrum of the graphs $H_{i}$ and the spectrum of another $k\times k$ matrix.

\begin{theorem}\cite{cardoso}\label{card}
Let $G$ be a graph on $k$ vertices with $V(G)=\{v_1,v_2,\ldots ,v_k\}$ and let $H_{1}, H_{2}, \ldots , H_{k}$ be $k$ pairwise disjoint graphs on $m_{1},m_{2}, \ldots , m_{k}$ vertices, respectively. Then the Laplacian spectrum of $G[H_{1}, H_{2}, \ldots , H_{k}]$ is given by
\begin{equation}\label{lap-spec}
\sigma_{L} \left(G\left[H_{1}, H_{2}, \ldots , H_{k}\right]\right)=\left(\underset{j=1}{\overset {k}\bigcup}\left(M_{j}+\left(\sigma _{L} \left(H_{j}\right)\setminus \{0\}\right)\right)\right) \bigcup \sigma (C),
\end{equation}
where
\begin{equation*}
M_{j}=
\begin{cases}
\underset{v_i\sim v_j}\sum m_{i} &\text{if $N_{G}(v_j)\neq \emptyset$}\\
0 &\text{otherwise,}
\end{cases}
\end{equation*}
\begin{equation*}
C=\begin{pmatrix}
                M_{1} & -s_{1,2} & \cdots & -s_{1,k} \\
                -s_{1,2} & M_{2} & \cdots & -s_{2,k} \\
                \cdots & \cdots & \cdots & \cdots \\
                -s_{1,k} & -s_{2,k} & \cdots & M_{k} \\
                \end{pmatrix},
\end{equation*}
and
\begin{equation*}
s_{i,j}=
\begin{cases}
\sqrt{m_{i}m_{j}} &\text{if $v_i\sim v_j$ in $G$}\\
0 &\text{otherwise.}
\end{cases}
\end{equation*}
In (\ref{lap-spec}), $M_{j}+(\sigma _{L} (H_{j})\setminus \{0\})$ means that $M_j$ is added to each element of $\sigma _{L} (H_{j})\setminus \{0\}$.
\end{theorem}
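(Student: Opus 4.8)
The proof I have in mind proceeds by producing, explicitly, all $N:=m_1+m_2+\cdots+m_k$ eigenvectors of $L:=L\left(G[H_1,\ldots,H_k]\right)$, organised according to an orthogonal decomposition of $\mathbb{R}^N$ into two $L$-invariant subspaces. Order the vertices of $G[H_1,\ldots,H_k]$ block by block, with the first $m_1$ coordinates indexing $V(H_1)$, the next $m_2$ indexing $V(H_2)$, and so on. Since a vertex $u\in V(H_j)$ is adjacent in $G[H_1,\ldots,H_k]$ to its $H_j$-neighbours together with every vertex of $H_i$ for each $i$ with $v_i\sim v_j$, its degree is $d_{H_j}(u)+M_j$. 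Hence $L$ is the $k\times k$ block matrix whose $(j,j)$ block is $L(H_j)+M_jI_{m_j}$ and whose $(i,j)$ block, for $i\neq j$, is $-J_{m_i\times m_j}$ if $v_i\sim v_j$ in $G$ and the zero matrix otherwise, where $J$ denotes an all-ones matrix of the indicated size.

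For $1\leq j\leq k$ let $w_j\in\mathbb{R}^N$ be the indicator vector of the block $V(H_j)$ (equal to $1$ on those coordinates and $0$ elsewhere), put $U:=\operatorname{span}\{w_1,\ldots,w_k\}$, and let $W:=U^{\perp}$, so that $W$ consists of exactly those vectors whose restriction to each block $V(H_j)$ is orthogonal to the all-ones vector $\mathbf{1}_{m_j}\in\mathbb{R}^{m_j}$. Then $\dim U=k$ and $\dim W=\sum_{j=1}^k(m_j-1)$. The single computational observation that drives everything is that $J_{m_i\times m_j}y=0$ whenever $y\in\mathbb{R}^{m_j}$ is orthogonal to $\mathbf{1}_{m_j}$. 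Combining this with $L(H_j)\mathbf{1}_{m_j}=0$ and $\mathbf{1}_{m_j}^{T}L(H_j)=0$, one reads off from the block form of $L$ that $L(W)\subseteq W$ and $L(U)\subseteq U$; since $\mathbb{R}^N=W\oplus U$ orthogonally, the Laplacian spectrum of $G[H_1,\ldots,H_k]$ is the multiset union $\sigma(L|_W)\cup\sigma(L|_U)$.

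It remains to diagonalise the two restrictions. On $W$ the operator $L|_W$ is block-diagonal with $j$-th block the restriction of $L(H_j)+M_jI$ to $\mathbf{1}_{m_j}^{\perp}$; the latter has spectrum $M_j+\bigl(\sigma_L(H_j)\setminus\{0\}\bigr)$ (remove one copy of $0$ from $\sigma_L(H_j)$ and shift by $M_j$), and taking the union over $j$ gives the first term of (\ref{lap-spec}). On $U$ it is convenient to use the orthonormal basis $e_j:=w_j/\sqrt{m_j}$; a direct computation from the block form yields $Le_j=M_je_j-\sum_{i:\,v_i\sim v_j}\sqrt{m_im_j}\,e_i$, so the matrix of $L|_U$ in the basis $\{e_1,\ldots,e_k\}$ is exactly $C$, and therefore $\sigma(L|_U)=\sigma(C)$, the second term of (\ref{lap-spec}). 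Putting the two spectra together proves the identity.

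The step requiring the most care is the last one: if one works instead in the natural (non-orthonormal) basis $\{w_1,\ldots,w_k\}$ of $U$, the resulting matrix is the non-symmetric $B$ with $B_{jj}=M_j$ and $B_{ji}=-m_i$ for $v_i\sim v_j$, and one then has to invoke the similarity $C=SBS^{-1}$ with $S=\operatorname{diag}(\sqrt{m_1},\ldots,\sqrt{m_k})$ to identify the spectra; passing to the orthonormal basis at the outset avoids this. One should also check that the stated conventions handle the degenerate cases correctly: when $N_G(v_j)=\emptyset$ we have $M_j=0$, the vector $w_j$ contributes the eigenvalue $0=C_{jj}$ (row $j$ of $C$ having no off-diagonal entries), consistent with $H_j$ being a union of connected components of $G[H_1,\ldots,H_k]$; and when $m_j=1$ the multiset $\sigma_L(H_j)\setminus\{0\}$ is empty, matching the fact that the block-$j$ contribution to $W$ is zero-dimensional. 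Finally, as a consistency check, the vector $(\sqrt{m_1},\ldots,\sqrt{m_k})^{T}$ lies in the kernel of $C$, which corresponds under $y\mapsto\sum_j y_j e_j$ to the all-ones vector $\mathbf{1}_N$, the eigenvector of $L$ for the eigenvalue $0$.
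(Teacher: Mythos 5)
Your proof is correct and complete. Note, however, that the paper itself offers no proof of this statement: Theorem \ref{card} is quoted verbatim from \cite[Theorem 8]{cardoso}, so there is no in-paper argument to compare against. Your argument --- writing $L$ in block form with diagonal blocks $L(H_j)+M_jI_{m_j}$ and off-diagonal blocks $-J$ or $0$, splitting $\mathbb{R}^N$ orthogonally into $W$ (vectors whose restriction to each block is orthogonal to the all-ones vector) and $U=\operatorname{span}\{w_1,\ldots,w_k\}$, and diagonalising the two restrictions --- is the standard equitable-partition proof and is essentially what the cited source does; all the delicate points (removing exactly one copy of $0$ from $\sigma_L(H_j)$, the empty contribution when $m_j=1$, the isolated-vertex case $M_j=0$) are handled correctly. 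Your closing remark about the non-orthonormal basis $\{w_1,\ldots,w_k\}$ producing the non-symmetric zero-row-sum matrix $B$ with $B_{jj}=M_j$, $B_{ji}=-m_i$, and the similarity $C=SBS^{-1}$ with $S=\operatorname{diag}(\sqrt{m_1},\ldots,\sqrt{m_k})$, is precisely the content of the paper's Proposition \ref{C-L(G)} (there stated as $\textbf{L}(G)=W^{-\frac{1}{2}}CW^{\frac{1}{2}}$), so your proof in fact subsumes that proposition as well.
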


Consider $G$ as a vertex weighted graph by assigning the weight $m_i=|V(H_i)|$ to the vertex $v_i$ of $G$ for $1\leq i\leq k$.
Let $\textbf{L}(G)=(\textbf{l}_{i,j})$ be the $k\times k$ matrix, where
$$\textbf{l}_{i,j}=
\begin{cases}
-m_{j}  &\text{if $i\neq j$ and $v_i\sim v_j$}\\
\underset{v_i\sim v_r}\sum m_{r} &\text{if $i=j$}\\
0 &\text{otherwise.}
\end{cases}$$
The matrix $\textbf{L}(G)$ is called a {\it vertex weighted Laplacian matrix} of $G$, which is a zero row sum matrix but not symmetric in general.

Note that the matrix $C$ in Theorem \ref{card} is precisely the matrix $\mathcal{L}(G)$ defined in \cite[p. 317]{chung}, which is symmetric but need not be a zero row sum matrix. Further, if $W$ is the $k\times k$ diagonal matrix with diagonal entries $m_{1},m_{2}, \ldots , m_{k}$, then $\textbf{L}(G)=W^{-\frac{1}{2}}CW^{\frac{1}{2}}$
and so $\textbf{L}(G)$ and $C$ are similar. We thus have the following.

\begin{proposition}\label{C-L(G)}
$\sigma(C)= \sigma(\textbf{L}(G))$.
\end{proposition}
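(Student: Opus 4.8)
The plan is to exploit the explicit factorization $\mathbf{L}(G) = W^{-1/2} C W^{1/2}$ that the text has already set up, where $W$ is the diagonal matrix with positive diagonal entries $m_1, m_2, \ldots, m_k$. Since all $m_i \geq 1 > 0$, the matrix $W$ is invertible and possesses a well-defined positive square root $W^{1/2} = \operatorname{diag}(\sqrt{m_1}, \ldots, \sqrt{m_k})$, with inverse $W^{-1/2} = \operatorname{diag}(m_1^{-1/2}, \ldots, m_k^{-1/2})$. The identity to verify is that $W^{-1/2} C W^{1/2}$ equals the matrix $\mathbf{L}(G)$ defined entrywise just above the proposition.

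First I would check this identity entry by entry. The $(i,j)$ entry of $W^{-1/2} C W^{1/2}$ is $m_i^{-1/2}\, c_{i,j}\, m_j^{1/2}$, where $c_{i,j}$ is the $(i,j)$ entry of $C$. For $i = j$ this gives $m_i^{-1/2} M_i\, m_i^{1/2} = M_i = \sum_{v_i \sim v_r} m_r$, which matches the diagonal entry $\mathbf{l}_{i,i}$ of $\mathbf{L}(G)$. For $i \neq j$ with $v_i \sim v_j$, the entry of $C$ is $-s_{i,j} = -\sqrt{m_i m_j}$, so we get $m_i^{-1/2}(-\sqrt{m_i m_j})\, m_j^{1/2} = -m_j$, matching $\mathbf{l}_{i,j}$. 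For $i \neq j$ with $v_i \not\sim v_j$, both $c_{i,j}$ and $\mathbf{l}_{i,j}$ are $0$. Hence $\mathbf{L}(G) = W^{-1/2} C W^{1/2}$, so $\mathbf{L}(G)$ and $C$ are similar matrices.

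Since similar matrices have the same characteristic polynomial, and therefore the same eigenvalues with the same algebraic multiplicities, I would conclude $\sigma(C) = \sigma(\mathbf{L}(G))$ as multisets, which is exactly the assertion. (One could equally observe that $C = W^{1/2} \mathbf{L}(G) W^{-1/2}$ goes the other way, but one direction suffices.)

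I do not anticipate a genuine obstacle here; this proposition is a bookkeeping corollary of the explicit conjugation already displayed in the excerpt. The only point needing a word of care is the invertibility of $W$: this relies on each $m_i = |V(H_i)|$ being a positive integer, i.e., that none of the graphs $H_i$ is empty. In the application to $\Gamma(\mathbb{Z}_n)$ each $H_i = \Gamma(A_{d_i})$ has $|A_{d_i}| = \phi(n/d_i) \geq 1$ vertices by Lemma \ref{mathew}, so $W$ is indeed invertible; I would note this explicitly (or simply assume, as is standard when forming a generalized join, that each $H_i$ is nonempty) so that $W^{1/2}$ and $W^{-1/2}$ make sense and the similarity argument is valid.
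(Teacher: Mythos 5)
Your proof is correct and follows the same route as the paper: the paper likewise establishes $\textbf{L}(G)=W^{-\frac{1}{2}}CW^{\frac{1}{2}}$ with $W=\operatorname{diag}(m_1,\ldots,m_k)$ and concludes by similarity, the only difference being that you spell out the entrywise verification and the positivity of the $m_i$, which the paper leaves implicit.
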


\subsection{Laplacian Spectrum of $\Gamma(\mathbb{Z}_{n})$}

Let $d_1,d_2,\ldots, d_k$ be the proper divisors of $n$. For $1\leq i\leq k$, we assign the weight $\phi \left(\frac{n}{d_{i}}\right)=| A_{d_{i}}|$ to the vertex $d_i$ of the graph $\Upsilon_n$. Define
\begin{equation}\label{m-d-j}
M_{d_{j}}=\sum\limits_{d_{i}\in N_{\Upsilon_n}(d_{j})}\phi \left(\frac{n}{d_{i}}\right)
\end{equation}
for $1\leq j\leq k$. The $k\times k$ vertex weighted Laplacian matrix $\textbf{L}(\Upsilon_n)$ of $\Upsilon_n$ defined in Section \ref{sec-join-graph} is given by
\begin{equation*}
\textbf{L}(\Upsilon_n)=\begin{pmatrix}
                M_{d_{1}} & -t_{1,2} & \cdots & -t_{1,k} \\
                -t_{2,1} & M_{d_{2}} & \cdots & -t_{2,k} \\
                \cdots & \cdots & \cdots & \cdots \\
                -t_{k,1} & -t_{k,2} & \cdots & M_{d_{k}} \\
                \end{pmatrix},
\end{equation*}
where
$$t_{i,j}=
\begin{cases}
\phi\left(\frac{n}{d_{j}}\right) &\text{if $d_{i}\sim d_{j}$ in $\Upsilon_n $}\\
0 &\text{otherwise}
\end{cases}$$
for $1\leq i\neq j\leq k$. The following theorem describes the Laplacian spectrum of the zero-divisor graph of $\mathbb{Z}_{n}$.

\begin{theorem}\label{main}
If $d_{1},d_{2},\cdots ,d_{k}$ are the proper divisors of $n$, then the Laplacian spectrum of $\Gamma(\mathbb{Z}_{n})$ is given by
\begin{eqnarray*}
\sigma _{L} \left(\Gamma\left(\mathbb{Z}_{n}\right)\right) = \underset{j=1}{\overset{k} \bigcup} \left(M_{d_{j}}+\left(\sigma _{L} \left(\Gamma\left(A_{d_{j}}\right)\right)\setminus \{0\}\right)\right)\bigcup  \sigma \left(\textbf{L}\left(\Upsilon_n\right)\right),
\end{eqnarray*}
where $M_{d_{j}}+\left(\sigma _{L} \left(\Gamma\left(A_{d_{j}}\right)\right)\right)$ means that $M_{d_j}$ is added to each element of $\sigma _{L} \left(\Gamma\left(A_{d_{j}}\right)\right)\setminus\{0\}$.
\end{theorem}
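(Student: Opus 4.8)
The plan is to deduce Theorem \ref{main} directly from the general formula for the Laplacian spectrum of a generalized join graph (Theorem \ref{card}) combined with the structural description of $\Gamma(\mathbb{Z}_n)$ as such a join (Lemma \ref{divisorgraph}) and the similarity observation of Proposition \ref{C-L(G)}. Concretely, I would argue that each of the three pieces appearing in Theorem \ref{main} matches the corresponding piece in Theorem \ref{card} once we take $G=\Upsilon_n$ and $H_i=\Gamma(A_{d_i})$.

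First I would invoke Lemma \ref{divisorgraph} to write $\Gamma(\mathbb{Z}_n)=\Upsilon_n[\Gamma(A_{d_1}),\ldots,\Gamma(A_{d_k})]$, so that Theorem \ref{card} applies with $k$ the number of proper divisors of $n$, with $v_i=d_i$, and with $m_i=|V(\Gamma(A_{d_i}))|=|A_{d_i}|=\phi(n/d_i)$ by Lemma \ref{mathew}. Next, I would check that the quantity $M_j$ of Theorem \ref{card} specializes to $M_{d_j}$ as defined in \eqref{m-d-j}: since the adjacency $v_i\sim v_j$ in $\Upsilon_n$ is exactly $d_i\sim d_j$ in $\Upsilon_n$, we get $M_j=\sum_{v_i\sim v_j} m_i=\sum_{d_i\in N_{\Upsilon_n}(d_j)}\phi(n/d_i)=M_{d_j}$; note also that $N_{\Upsilon_n}(d_j)\neq\emptyset$ for every $j$ because $\Upsilon_n$ is connected by Lemma \ref{UP} (assuming $k\geq 2$; the trivial case $k=1$, i.e. $n=p^2$, can be handled separately since then $\Gamma(\mathbb{Z}_n)=K_{p-1}$ and the single-vertex matrix $\textbf{L}(\Upsilon_n)=(0)$ together with the formula still gives the correct spectrum). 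This identifies the first union $\bigcup_{j=1}^k\left(M_{d_j}+(\sigma_L(\Gamma(A_{d_j}))\setminus\{0\})\right)$ in Theorem \ref{main} with the first union in \eqref{lap-spec}.

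For the third piece, I would compare the matrix $C$ of Theorem \ref{card} with $\textbf{L}(\Upsilon_n)$. The diagonal entries of $C$ are the $M_j=M_{d_j}$, matching the diagonal of $\textbf{L}(\Upsilon_n)$. The off-diagonal entry of $C$ in position $(i,j)$ is $-s_{i,j}=-\sqrt{m_im_j}$ when $d_i\sim d_j$ and $0$ otherwise, whereas the corresponding entry of $\textbf{L}(\Upsilon_n)$ is $-t_{i,j}=-\phi(n/d_j)=-m_j$ when $d_i\sim d_j$ and $0$ otherwise. These are not literally equal, but by the discussion preceding Proposition \ref{C-L(G)}, with $W=\mathrm{diag}(m_1,\ldots,m_k)$ one has $\textbf{L}(\Upsilon_n)=W^{-1/2}CW^{1/2}$, so $C$ and $\textbf{L}(\Upsilon_n)$ are similar and hence $\sigma(C)=\sigma(\textbf{L}(\Upsilon_n))$ — this is exactly Proposition \ref{C-L(G)}. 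Substituting $\sigma(\textbf{L}(\Upsilon_n))$ for $\sigma(C)$ in \eqref{lap-spec} yields the stated formula, so the proof is essentially a verification that the abstract data $(G,H_i,m_i)$ of Theorem \ref{card} is instantiated correctly.

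I do not expect a genuine obstacle here — the result is a corollary of Theorem \ref{card}, Proposition \ref{C-L(G)}, and Lemmas \ref{mathew} and \ref{divisorgraph} — but the one point requiring a little care is the bookkeeping around the definition of $M_j$ in the degenerate cases: one must confirm that $N_{\Upsilon_n}(d_j)\neq\emptyset$ for all $j$ whenever $k\geq 2$, so that the "otherwise" branch ($M_j=0$) of the definition in Theorem \ref{card} never arises for $\Gamma(\mathbb{Z}_n)$, and separately note that $\Upsilon_n$ has no isolated vertices. Connectedness of $\Upsilon_n$ (Lemma \ref{UP}) gives this immediately when $k\geq 2$. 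The only other thing to state explicitly is the multiset interpretation of the unions in \eqref{lap-spec}, which is inherited verbatim from Theorem \ref{card}.
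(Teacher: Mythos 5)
Your proposal is correct and follows essentially the same route as the paper: apply Lemma \ref{divisorgraph} to realize $\Gamma(\mathbb{Z}_n)$ as the generalized join $\Upsilon_n[\Gamma(A_{d_1}),\ldots,\Gamma(A_{d_k})]$, then invoke Theorem \ref{card} and Proposition \ref{C-L(G)} to replace $\sigma(C)$ by $\sigma(\textbf{L}(\Upsilon_n))$. Your extra bookkeeping (identifying $M_j$ with $M_{d_j}$, noting $N_{\Upsilon_n}(d_j)\neq\emptyset$ via Lemma \ref{UP}, and treating $k=1$ separately) only makes explicit what the paper leaves implicit.
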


\begin{proof}
By Lemma \ref{divisorgraph}, we have $\Gamma(\mathbb{Z}_{n})=\Upsilon_{n}[\Gamma(A_{d_1}),\Gamma(A_{d_2}),\ldots, \Gamma(A_{d_k})]$. Then the result follows from Theorem \ref{card} and Proposition \ref{C-L(G)}.
\end{proof}

The Laplacian spectrum of the complete graph $K_m$ on $m$ vertices and its complement graph $\overline{K}_m$ are known. Indeed,
$$\sigma_L(K_m)=\left\{\begin{matrix}
                0 & m \\
               1 & m-1 \\
                \end{matrix}\right\}
                \text{ and }
                \sigma_L(\overline{K}_m)=\left\{\begin{matrix}
                0 \\
               m \\
                \end{matrix}\right\}.$$
By Corollary \ref{struc-coro}(i), $\Gamma(A_{d_{i}})$ is either $K_{\phi\left(\frac{n}{d_{i}}\right)}$ or $\overline{K}_{\phi\left(\frac{n}{d_{i}}\right)}$ for $1\leq i\leq k$. Also, $M_{d_j}>0$ as $\Upsilon_n$ is connected by Lemma \ref{UP}. Thus, by Theorem \ref{main}, out of the $n-\phi(n)-1$ number of Laplacian eigenvalues of $\Gamma(\mathbb{Z}_{n})$, $n-\phi(n)-1-k$ of them are known to be nonzero integer values. The remaining $k$ Laplacian eigenvalues of $\Gamma(\mathbb{Z}_{n})$ will come from the spectrum of $\textbf{L}(\Upsilon_n)$.

\begin{example}\label{lapspec-dis}
We discuss the Laplacian spectrum of $\Gamma(\mathbb{Z}_{n})$ for $n\in\{pq,p^2 q\}$, where $p$ and $q$ are distinct primes.\\

(i) Let $n=pq$, where $p<q$ are distinct primes. The proper divisors of $n$ are $p$ and $q$. So $\Upsilon_{pq}:p\sim q$ is $K_{2}$ and by Lemma \ref{divisorgraph}, $\Gamma(\mathbb{Z}_{pq})=\Upsilon_{pq}[\Gamma(A_{p}),\Gamma(A_{q})]$. By Corollary \ref{struc-coro}(i), $\Gamma(A_{p})=\overline{K}_{\phi(q)}$ and $\Gamma(A_{q})=\overline{K}_{\phi(p)}$.
We have $M_{p}=\phi\left(\frac{n}{q} \right)=\phi(p)$ and $M_{q}=\phi\left(\frac{n}{p} \right)=\phi(q)$. So, by Theorem \ref{main}, the Laplacian spectrum of $\Gamma(\mathbb{Z}_{pq})$ is given by
\begin{eqnarray*}
\sigma _{L} (\Gamma(\mathbb{Z}_{pq}))& =& (M_{p}+(\sigma _{L} (\Gamma(A_{p}))\setminus \{0\}))\bigcup (M_{q}+(\sigma _{L} (\Gamma(A_{q}))\setminus \{0\}))\bigcup \sigma (\textbf{L}(\Upsilon_{pq}))\\
&=& \left\{\begin{matrix}
 p-1 & q-1 \\
 q-2 & p-2 \\
 \end{matrix}\right\} \bigcup \sigma (\textbf{L}(\Upsilon_{pq})).
\end{eqnarray*}
We have $$\textbf{L}(\Upsilon_{pq})=\begin{pmatrix}
                \phi(p) & -\phi(p) \\
                -\phi(q) & \phi(q) \\
                \end{pmatrix}
                =\begin{pmatrix}
                p-1 & -(p-1) \\
                -(q-1) & q-1\\
                \end{pmatrix}.$$
which has eigenvalues $p+q-2$ and $0$. Thus the Laplacian spectrum of $\Gamma(\mathbb{Z}_{pq})$ is
    $$\sigma _{L} (\Gamma(\mathbb{Z}_{pq}))=\left\{\begin{matrix}
                 p-1 & q-1 & 0 & p+q-2 &\\
                q-2 & p-2 & 1 & 1 \\
                \end{matrix}\right\}.$$

Note that $\Gamma(\mathbb{Z}_{pq})=\Upsilon_{pq}[\Gamma(A_{p}),\Gamma(A_{q})]=\overline{K}_{\phi(p)}\vee \overline{K}_{\phi(q)}=K_{\phi(p),\phi(q)}$. Using the result known for the Laplacian eigenvalues of complete bipartite graphs, the Laplacian spectrum of $\Gamma(\mathbb{Z}_{pq})$ can also be obtained as above.\\

(ii) Let $n=p^{2}q$, where $p$ and $q$ are distinct primes. The proper divisors of $n$ are $p$, $q$, $pq$ and $p^2$. So $\Upsilon_{p^2q}$ is the path $P_{4}: p\sim pq \sim p^{2} \sim q$. By Lemma \ref{divisorgraph},
 $$\Gamma\left(\mathbb{Z}_{p^2q}\right)=\Upsilon_{p^2q}\left[\Gamma\left(A_{p}\right),\Gamma\left(A_{pq}\right),\Gamma\left(A_{p^{2}}\right),\Gamma\left(A_{q}\right)\right].$$
By Corollary \ref{struc-coro}(i), $\Gamma\left(A_{p}\right)=\overline{K}_{\phi(pq)}$, $\Gamma\left(A_{pq}\right)=K_{\phi(p)}$, $\Gamma\left(A_{p^{2}}\right)=\overline{K}_{\phi(q)}$ and $\Gamma\left(A_{q}\right)=\overline{K}_{\phi\left(p^{2}\right)}$.
We have
\begin{align*}
M_{p}&=\phi\left(\frac{n}{pq} \right)=\phi(p),\\
M_{pq}&=\phi\left(\frac{n}{p} \right)+\phi\left(\frac{n}{p^2} \right)=\phi(pq)+\phi(q),\\
M_{p^{2}}&= \phi\left(\frac{n}{pq} \right)+\phi\left(\frac{n}{q} \right)=\phi(p)+\phi(p^{2}),\\
M_{q}&=\phi\left(\frac{n}{p^2} \right)=\phi(q).
\end{align*}
So, by Theorem \ref{main}, the Laplacian spectrum of $\Gamma(\mathbb{Z}_{p^2q})$ is given by
\begin{eqnarray*}
\sigma_{L}\left(\Gamma\left(\mathbb{Z}_{p^2q}\right)\right)&=& \left(M_{p}+\left(\sigma _{L} \left(\Gamma\left(A_{p}\right)\right)\setminus \{0\}\right)\right)\bigcup \left(M_{pq}+\left(\sigma _{L} \left(\Gamma\left(A_{pq}\right)\right)\setminus \{0\}\right)\right) \\
& & \bigcup \left(M_{p^{2}}+\left(\sigma _{L} \left(\Gamma\left(A_{p^{2}}\right)\right)\setminus \{0\}\right)\right)\bigcup \left(M_{q}+\left(\sigma _{L} \left(\Gamma\left(A_{q}\right)\right)\setminus \{0\}\right)\right)\\
& & \bigcup \sigma \left(\textbf{L}\left(\Upsilon_{p^2q}\right)\right)\\
&=& \left\{\begin{matrix}
 p-1 & pq-1 & p^{2}-1 & q-1\\
 \phi(pq)-1 & \phi(p)-1 &  \phi(q)-1 & \phi(p^{2})-1\\
 \end{matrix}\right\}  \bigcup \sigma \left(\textbf{L}\left(\Upsilon_{p^{2}q}\right)\right).
\end{eqnarray*}
We have
$$\textbf{L}\left(\Upsilon_{p^{2}q}\right)=\begin{pmatrix}
                \phi(p) & -\phi(p) &  0 & 0 \\
                -\phi(pq) & \phi(pq)+\phi(q) &  -\phi(q) & 0 \\
                 0 & -\phi(p) &  \phi(p)+\phi(p^{2}) & -\phi(p^{2}) \\
                0 & 0 &  -\phi(q) & \phi(q) \\
                \end{pmatrix}.$$
The characteristic polynomial $Q(x)$ of $\textbf{L}\left(\Upsilon_{p^{2}q}\right)$ is $$x[x^3-(p^2+pq+q-3)x^2+((pq-1)(p^2+q-2)+(p-1)(q-1))x-(p-1)(q-1)(p^2+pq-p-1)].$$
If $K(x)=\frac{Q(x)}{x}$, then the algebraic connectivity and the Lapacian spectral radius of $\Gamma\left(\mathbb{Z}_{p^2q}\right)$ are the smallest and the largest roots of $K(x)=0,$ respectively. This follows from Theorem \ref{alg} in the last section.
\end{example}

\section{Laplacian Integrality of $\Gamma\left(\mathbb{Z}_{p^m}\right)$}\label{sec-integrality}

Recall that a graph $G$ is called Laplacian integral if all the Laplacian eigenvalues of $G$ are integers. The following proposition is an immediate consequence of the observation made in the paragraph after Theorem \ref{main}.

\begin{proposition}\label{integral}
The zero-divisor graph $\Gamma(\mathbb{Z}_{n})$ is Laplacian integral if and only if all the eigenvalues of $\textbf{L}(\Upsilon_n)$ are integers.
\end{proposition}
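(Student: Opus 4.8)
The plan is to derive this proposition directly from the structure of the Laplacian spectrum of $\Gamma(\mathbb{Z}_n)$ given in Theorem \ref{main}, which decomposes $\sigma_L(\Gamma(\mathbb{Z}_n))$ into the union of two parts: the ``shifted'' pieces $M_{d_j} + (\sigma_L(\Gamma(A_{d_j}))\setminus\{0\})$ for $1\le j\le k$, and the spectrum $\sigma(\textbf{L}(\Upsilon_n))$. The key observation, already recorded in the paragraph following Theorem \ref{main}, is that the first part consists entirely of nonnegative \emph{integers}: by Corollary \ref{struc-coro}(i) each $\Gamma(A_{d_j})$ is either $K_{\phi(n/d_j)}$ or $\overline{K}_{\phi(n/d_j)}$, whose Laplacian spectra are $\{0^{(1)}, \phi(n/d_j)^{(\phi(n/d_j)-1)}\}$ and $\{0^{(\phi(n/d_j))}\}$ respectively, and each $M_{d_j}$ is a sum of values of $\phi$, hence a positive integer since $\Upsilon_n$ is connected by Lemma \ref{UP}.

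First I would spell out the forward direction: if $\Gamma(\mathbb{Z}_n)$ is Laplacian integral, then in particular every eigenvalue lying in $\sigma(\textbf{L}(\Upsilon_n))$ is an integer, because $\sigma(\textbf{L}(\Upsilon_n)) \subseteq \sigma_L(\Gamma(\mathbb{Z}_n))$ as multisets by Theorem \ref{main}. Then I would do the converse: assume all eigenvalues of $\textbf{L}(\Upsilon_n)$ are integers. By the discussion above, each element of each set $M_{d_j} + (\sigma_L(\Gamma(A_{d_j}))\setminus\{0\})$ is an integer, and by hypothesis each element of $\sigma(\textbf{L}(\Upsilon_n))$ is an integer; since by Theorem \ref{main} the multiset $\sigma_L(\Gamma(\mathbb{Z}_n))$ is exactly the union of these, every Laplacian eigenvalue of $\Gamma(\mathbb{Z}_n)$ is an integer, i.e.\ $\Gamma(\mathbb{Z}_n)$ is Laplacian integral.

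This is genuinely a short argument, so there is no serious obstacle; the only point needing a word of care is the multiset bookkeeping — one must note that Theorem \ref{main} gives an \emph{equality} of multisets (the total count matches the $n-\phi(n)-1$ vertices of $\Gamma(\mathbb{Z}_n)$), so integrality of each constituent multiset is equivalent to integrality of the whole, with no eigenvalue unaccounted for. I would also remark that this reduces the Laplacian integrality question for $\Gamma(\mathbb{Z}_n)$ to a concrete finite linear-algebra problem about the matrix $\textbf{L}(\Upsilon_n)$, which is the step exploited in the next section to settle the prime-power case $n = p^t$.
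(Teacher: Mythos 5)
Your argument is correct and is essentially the paper's own proof: the paper derives Proposition \ref{integral} as an immediate consequence of the observation following Theorem \ref{main}, namely that the shifted eigenvalues $M_{d_j}+\left(\sigma_{L}\left(\Gamma\left(A_{d_j}\right)\right)\setminus\{0\}\right)$ are all integers because each $\Gamma\left(A_{d_j}\right)$ is a complete or null graph and each $M_{d_j}$ is a positive integer, so integrality of $\Gamma\left(\mathbb{Z}_{n}\right)$ reduces to integrality of $\sigma\left(\textbf{L}\left(\Upsilon_n\right)\right)$. Your added remark on the multiset bookkeeping is a fair point of care but does not change the substance.
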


By Example \ref{lapspec-dis}(i), the graph $\Gamma(\mathbb{Z}_{pq})$ is Laplacian integral for distinct primes $p$ and $q$. In this section, we shall prove that $\Gamma(\mathbb{Z}_{p^t})$ is Lapacian integral for every prime $p$ and $t\geq 2$. One approach is to show that all the eigenvalues of $\textbf{L}\left(\Upsilon_{p^t}\right)$ are integers and then to use Proposition \ref{integral}. However, if $t$ is large, then it is more difficult to find the eigenvalues of $\textbf{L}\left(\Upsilon_{p^t}\right)$. We shall adopt a different approach to find the Laplacian characteristic polynomial of $\Gamma\left(\mathbb{Z}_{p^t}\right)$. For this, we first express $\Gamma\left(\mathbb{Z}_{p^t}\right)$ as the union and join of certain complete graphs and null graphs and then use Theorem \ref{join} below to find the Laplacian eigenvalues of $\Gamma\left(\mathbb{Z}_{p^t}\right)$.

For a graph $G$, we denote the characteristic polynomial of $L(G)$ by $\Theta (G,x)$. The following theorem gives the Laplacian characteristic polynomial of the join of two graphs, see \cite[Corollary 3.7]{mohar}.

\begin{theorem}\cite{mohar}\label{join}
Let $G_{1}$ and $G_{2}$ be two vertex disjoint graphs on $n_{1}$ and $n_{2}$ vertices, respectively. Then the Laplacian characteristic polynomial of $G_{1}\vee G_{2}$ is given by
$$\Theta (G_{1}\vee G_{2},x) = \frac{x(x - n_{1} - n_{2})}{(x - n_{1})(x - n_{2})}\Theta(G_{1}, x - n_{2})\Theta(G_{2}, x - n_{1}).$$
\end{theorem}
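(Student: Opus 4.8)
The plan is to expand $\Theta(G_1\vee G_2,x)=\det\!\big(xI-L(G_1\vee G_2)\big)$ by a Schur-complement argument, after writing $L(G_1\vee G_2)$ in block form with respect to the partition $V(G_1\vee G_2)=V(G_1)\cup V(G_2)$. In the join, every vertex of $G_1$ acquires $n_2$ new neighbours (all of $G_2$) and every vertex of $G_2$ acquires $n_1$ new neighbours, and the new edges fill the off-diagonal blocks with all-ones matrices $J$. Hence
\[
xI-L(G_1\vee G_2)=\begin{pmatrix}(x-n_2)I_{n_1}-L(G_1) & J_{n_1\times n_2}\\ J_{n_2\times n_1} & (x-n_1)I_{n_2}-L(G_2)\end{pmatrix}.
\]
Set $P=(x-n_2)I_{n_1}-L(G_1)$ and $T=(x-n_1)I_{n_2}-L(G_2)$. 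For all but finitely many $x$ the block $P$ is invertible, and the Schur-complement formula gives $\det(xI-L(G_1\vee G_2))=\det(P)\,\det\!\big(T-J_{n_2\times n_1}P^{-1}J_{n_1\times n_2}\big)$.

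The crucial observation is that $\mathbf 1$ is an eigenvector of each diagonal block: since $L(G_i)\mathbf 1=\mathbf 0$, we have $P\mathbf 1_{n_1}=(x-n_2)\mathbf 1_{n_1}$ and $T\mathbf 1_{n_2}=(x-n_1)\mathbf 1_{n_2}$, so $P^{-1}\mathbf 1_{n_1}=\tfrac1{x-n_2}\mathbf 1_{n_1}$ and $T^{-1}\mathbf 1_{n_2}=\tfrac1{x-n_1}\mathbf 1_{n_2}$ (assuming also $x\neq n_1,n_2$). Writing $J_{n_1\times n_2}=\mathbf 1_{n_1}\mathbf 1_{n_2}^{\top}$, the correction term collapses to the rank-one matrix
\[
J_{n_2\times n_1}P^{-1}J_{n_1\times n_2}=\big(\mathbf 1_{n_1}^{\top}P^{-1}\mathbf 1_{n_1}\big)\,\mathbf 1_{n_2}\mathbf 1_{n_2}^{\top}=\frac{n_1}{x-n_2}\,\mathbf 1_{n_2}\mathbf 1_{n_2}^{\top}.
\]
Applying the matrix determinant lemma (rank-one update) to $T-\tfrac{n_1}{x-n_2}\mathbf 1_{n_2}\mathbf 1_{n_2}^{\top}$ and using $T^{-1}\mathbf 1_{n_2}=\tfrac1{x-n_1}\mathbf 1_{n_2}$,
\[
\det\!\Big(T-\tfrac{n_1}{x-n_2}\mathbf 1_{n_2}\mathbf 1_{n_2}^{\top}\Big)=\det(T)\Big(1-\frac{n_1 n_2}{(x-n_1)(x-n_2)}\Big)=\det(T)\,\frac{x(x-n_1-n_2)}{(x-n_1)(x-n_2)},
\]
the last equality being the elementary identity $(x-n_1)(x-n_2)-n_1n_2=x(x-n_1-n_2)$. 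Since $\det(P)=\Theta(G_1,x-n_2)$ and $\det(T)=\Theta(G_2,x-n_1)$, substituting back produces exactly the claimed formula.

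The one place requiring care is legitimacy: the Schur-complement step and the inversions of $P$ and $T$ are valid only for $x$ avoiding the finite set where $\Theta(G_1,x-n_2)=0$, $\Theta(G_2,x-n_1)=0$, $x=n_1$, or $x=n_2$. I would conclude with the standard identity-of-rational-functions argument: both sides are rational functions of $x$ agreeing on a cofinite set, hence agreeing identically; moreover the right-hand side is a genuine polynomial, because $\Theta(G_i,0)=0$ forces $(x-n_2)\mid\Theta(G_1,x-n_2)$ and $(x-n_1)\mid\Theta(G_2,x-n_1)$, cancelling the denominator. I do not expect any substantive obstacle beyond this routine bookkeeping with the exceptional values; the heart of the argument is the rank-one collapse of the correction term, which is immediate from $L(G_i)\mathbf 1=\mathbf 0$.
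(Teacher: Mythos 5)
Your proof is correct: the block form of $xI-L(G_{1}\vee G_{2})$, the Schur complement, the rank-one collapse of $J\,P^{-1}J$ via $L(G_{1})\mathbf{1}=\mathbf{0}$, the matrix determinant lemma, the identity $(x-n_{1})(x-n_{2})-n_{1}n_{2}=x(x-n_{1}-n_{2})$, and the final rational-function/cofinite-set argument (with the observation that $\Theta(G_{i},0)=0$ cancels the denominator) are all sound. Note, however, that the paper itself offers no proof of this statement: it is quoted as Corollary 3.7 of the cited survey of Mohar, so there is nothing in the paper to compare against line by line. The standard derivation in that source runs through complements rather than through a direct determinant computation: one uses $\overline{G_{1}\vee G_{2}}=\overline{G_{1}}\cup\overline{G_{2}}$ together with the formula relating $\Theta(\overline{G},x)$ to $\Theta(G,m-x)$ for a graph on $m$ vertices (the same circle of ideas as Fiedler's relation $\lambda(G)=m-\mu(\overline{G})$ quoted in the introduction), applied twice around a disjoint union, whose Laplacian characteristic polynomial is just the product of those of the pieces. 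Your route buys self-containedness --- it needs only elementary block linear algebra and the single fact $L(G)\mathbf{1}=\mathbf{0}$ --- at the cost of the bookkeeping about exceptional values of $x$; the complement route is shorter once the complement-spectrum formula is in hand, and it makes transparent why the factors $x-n_{1}$, $x-n_{2}$, $x$, and $x-n_{1}-n_{2}$ appear (they track the trivial eigenvalues $0$ and $n_{1}+n_{2}$ being exchanged under complementation). Either argument establishes the theorem; yours is a legitimate and complete alternative.
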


\begin{theorem}\label{lapspec-primepower}
Let $n=p^t$ where $p$ is a prime and $t\geq 2$ is a positive integer. Then the following hold.
\begin{enumerate}
\item[(i)] If $t=2$, then the Laplacian spectrum of $\Gamma(\mathbb{Z}_{n})$ is given by
$$\left\{\begin{matrix}
                0 \\
                1 \\
                \end{matrix}\right\}  \mbox{ or } \left\{\begin{matrix}
                 p-1 & 0 \\
                p-2 & 1 \\
                \end{matrix}\right\}$$
according as $p=2$ or $p\geq 3$.
\item[(ii)]  If $t=2m$ for some integer $m\geq 2$, then the Laplacian spectrum of $\Gamma(\mathbb{Z}_{n})$ is given by
$$\left\{\begin{matrix}
                p^{2m-1}-1 & p^{2m-2}-1 & \cdots & p^{m+1}-1 & p^{m}-1 & p^{m-1}-1 & \cdots & p-1 & 0 \\
                \phi(p) & \phi(p^{2}) & \cdots & \phi(p^{m-1}) & \phi(p^{m})-1 & \phi(p^{m+1}) & \cdots & \phi(p^{2m-1}) & 1 \\
                \end{matrix}\right\}.$$
\item[(iii)] If $t=2m+1$ for some integer $m\geq 1$, then the Laplacian spectrum of $\Gamma(\mathbb{Z}_{n})$ is given by
$$\left\{\begin{matrix}
                p^{2m}-1 & p^{2m-1}-1 & \cdots & p^{m+1}-1 & p^{m}-1 & p^{m-1}-1 & \cdots &  p-1 & 0 \\
                \phi(p) & \phi(p^{2}) & \cdots & \phi(p^{m}) & \phi(p^{m+1})-1 & \phi(p^{m+2}) & \cdots & \phi(p^{2m}) & 1 \\
                \end{matrix}\right\}.$$
\end{enumerate}
\end{theorem}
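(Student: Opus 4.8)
The plan is to combine the block structure of $\Gamma(\mathbb{Z}_{n})$ developed in Section~\ref{sec-srtucture} with the join formula of Theorem~\ref{join}. Part~(i) is immediate: for $t=2$ the only proper divisor of $p^{2}$ is $p$, so $\Gamma(\mathbb{Z}_{p^{2}})=\Gamma(A_{p})$, which equals $K_{\phi(p)}=K_{p-1}$ by Corollary~\ref{struc-coro}(i), and the Laplacian spectrum of $K_{p-1}$ is exactly the one displayed. So assume $t\ge 3$ from now on.

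The heart of the proof is a structural decomposition of $\Gamma(\mathbb{Z}_{p^{t}})$ as an iterated join and disjoint union of complete graphs and null graphs. The ingredients, all from Section~\ref{sec-srtucture}, are: $|A_{p^{i}}|=\phi(p^{t-i})$ (Lemma~\ref{mathew}); the induced subgraph $\Gamma(A_{p^{i}})$ equals $K_{\phi(p^{t-i})}$ if $2i\ge t$ and $\overline{K}_{\phi(p^{t-i})}$ if $2i<t$ (Corollary~\ref{struc-coro}(i)); and $A_{p^{i}}$ is completely joined to $A_{p^{j}}$ (for $i\ne j$) precisely when $i+j\ge t$ (Lemma~\ref{struc} and Corollary~\ref{struc-coro}(ii)). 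Peeling the blocks off in the order $A_{p^{t-1}},A_{p^{1}},A_{p^{t-2}},A_{p^{2}},\ldots$, one checks, using only the inequality $i+j\ge t$, that $A_{p^{t-1}}$ is a clique of size $p-1$ joined to every other vertex; that once it is removed, $A_{p^{1}}$ is a null graph on $\phi(p^{t-1})$ vertices joined to nothing that remains; that once $A_{p^{1}}$ is also removed, $A_{p^{t-2}}$ is a clique of size $\phi(p^{2})$ joined to everything that remains; and so on, alternately peeling a universal clique and an isolated null set, down to the middle. This yields
$$\Gamma(\mathbb{Z}_{p^{t}})=K_{\phi(p)}\vee\big(\overline{K}_{\phi(p^{t-1})}\cup\big(K_{\phi(p^{2})}\vee\big(\overline{K}_{\phi(p^{t-2})}\cup(\cdots)\big)\big)\big),$$
where $\cup$ denotes disjoint union and the innermost graph is $K_{\phi(p^{m})}$ if $t=2m$ and $K_{\phi(p^{m})}\vee\overline{K}_{\phi(p^{m+1})}$ if $t=2m+1$. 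Establishing this decomposition rigorously — confirming that the named block really is universal, resp.\ isolated, inside each truncated graph, and correctly identifying the two parity-dependent innermost graphs — is the only genuinely delicate point; everything afterwards is bookkeeping.

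With the decomposition in place, set $\Gamma_{0}=\Gamma(\mathbb{Z}_{p^{t}})$ and write the successive truncations as $\Gamma_{l-1}=K_{\phi(p^{l})}\vee(\overline{K}_{\phi(p^{t-l})}\cup\Gamma_{l})$. Applying Theorem~\ref{join} to each join, together with $\Theta(K_{a},x)=x(x-a)^{a-1}$, $\Theta(\overline{K}_{a},x)=x^{a}$ and $\Theta(G\cup H,x)=\Theta(G,x)\Theta(H,x)$ (the factor $(x-n_{1})(x-n_{2})$ in the denominator of Theorem~\ref{join} cancelling against factors already present in the numerator polynomials), I expect the recursion
$$\Theta(\Gamma_{l-1},x)=x\,(x-|\Gamma_{l-1}|)^{\phi(p^{l})}\,(x-\phi(p^{l}))^{\phi(p^{t-l})-1}\,\Theta(\Gamma_{l},\,x-\phi(p^{l})),$$
which runs for $l=1,\ldots,m-1$ with base $\Gamma_{m-1}=K_{\phi(p^{m})}$ when $t=2m$, and for $l=1,\ldots,m$ (with $\Gamma_{m}$ the empty graph) when $t=2m+1$. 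Unwinding the recursion, the cumulative shift after $l$ steps is $A_{l}:=\sum_{j=1}^{l}\phi(p^{j})=p^{l}-1$, and the identities $A_{l-1}+|\Gamma_{l-1}|=p^{t-l}-1$ and $A_{l-1}+\phi(p^{l})=A_{l}=p^{l}-1$ force every linear factor that is produced to collapse to the form $x-(p^{a}-1)$. Collecting exponents, the eigenvalue $p^{j}-1$ acquires multiplicity $\phi(p^{t-j})$ for each $j\in\{1,2,\ldots,t-1\}\setminus\{m\}$ (where $m=\lfloor t/2\rfloor$), multiplicity $\phi(p^{t-m})-1$ for $j=m$, and multiplicity $1$ for $j=0$ — which is precisely the list in (ii) and (iii). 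A final check that these multiplicities sum to $n-\phi(n)-1=p^{t-1}-1$ guards against arithmetic slips, and as a byproduct the computation re-proves, now with an explicit formula, that $\Gamma(\mathbb{Z}_{p^{t}})$ is Laplacian integral.
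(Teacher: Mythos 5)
Your proposal is correct and follows essentially the same route as the paper: the iterated join/union decomposition you describe (peeling $K_{\phi(p)},\overline{K}_{\phi(p^{t-1})},K_{\phi(p^2)},\ldots$ down to $K_{\phi(p^m)}$ or $K_{\phi(p^m)}\vee\overline{K}_{\phi(p^{m+1})}$) is exactly the paper's recursive graphs $G_1,\ldots,G_m$ and $Y_1,\ldots,Y_m$ built from $\Upsilon_{p^t}$, and the repeated application of Theorem~\ref{join} with the telescoping identity $\sum_{j\le l}\phi(p^j)=p^l-1$ is the paper's computation as well. The only difference is presentational: you package the iteration as a single explicit recursion $\Theta(\Gamma_{l-1},x)=x\,(x-|\Gamma_{l-1}|)^{\phi(p^{l})}\,(x-\phi(p^{l}))^{\phi(p^{t-l})-1}\,\Theta(\Gamma_{l},x-\phi(p^{l}))$ and unwind it, where the paper computes $\Theta(G_1),\Theta(G_2),\Theta(G_3)$ explicitly and continues inductively.
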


\begin{proof}
(i) We have $\Gamma\left(\mathbb{Z}_{p^2}\right)=\Gamma\left(A_p\right)$ is the complete graph $K_{p-1}$ by Corollary \ref{struc-coro}(i) and so the results follows depending on $p=2$ or not.\\

(ii) Here $n=p^{2m}$ with $m\geq 2$ and the proper divisors of $n$ are $p,p^{2},\ldots, p^{2m-1}$. We shall express the graph $\Upsilon_{p^{2m}}$ as the join and union of certain graphs. Observe that the vertex $p^{i}$, $1\leq i\leq 2m-1$, of $\Upsilon_{p^{2m}}$ is adjacent to the vertex $p^j$ for every $j\geq 2m-i$ with $j\neq i$. Define the following graphs $H_1,H_2,\ldots, H_m$ recursively, where $\{x\}$ denotes the graph with one vertex $x$:
\begin{eqnarray*}
H_{1} &=& \left\{p^{m}\right\} \\
H_{2} &=& \left\{p^{m+1}\right\}\vee \left[\left\{p^{m-1}\right\}\cup H_{1} \right] \\
H_{3} &=& \left\{p^{m+2}\right\}\vee\left[\left\{p^{m-2}\right\}\cup H_{2}\right] \\
 &\vdots &  \\
H_{m} &=& \left\{p^{2m-1}\right\}\vee\left[\{p\}\cup H_{m-1}\right].
\end{eqnarray*}
It can be seen that $H_m$ is precisely the graph $\Upsilon_{p^{2m}}$. Now define the graphs $G_1,G_2,\ldots, G_m$ recursively as given below:
\begin{eqnarray*}
G_{1} &=& K_{\phi\left(p^{m}\right)} \\
G_{2} &=& K_{\phi\left(p^{m-1}\right)}\vee\left[\overline{K}_{\phi\left(p^{m+1}\right)}\cup G_{1}\right] \\
G_{3} &=& K_{\phi\left(p^{m-2}\right)}\vee\left[\overline{K}_{\phi\left(p^{m+2}\right)}\cup G_{2}\right] \\
 &\vdots &  \\
G_{m} &=& K_{\phi(p)}\vee\left[\overline{K}_{\phi\left(p^{2m-1}\right)}\cup G_{m-1}\right].
\end{eqnarray*}
We have $\Gamma\left(\mathbb{Z}_{p^{2m}}\right)=\Upsilon_{p^{2m}}\left[\Gamma\left(A_p\right),\Gamma\left(A_{p^2}\right),\ldots,\Gamma\left(A_{p^{2m-1}}\right)\right]$. Since $\Gamma\left(A_{p^j}\right)=K_{\phi\left(p^{2m-j}\right)}$ for $m\leq j\leq 2m-1$ and $\Gamma\left(A_{p^j}\right)=\overline{K}_{\phi\left(p^{2m-j}\right)}$ for $1\leq j\leq m-1$, it follows that $\Gamma(\mathbb{Z}_{p^{2m}})$ is precisely the graph $G_m$. The Laplacian characteristic polynomial of $G_1$ is
$$\Theta \left(G_{1},x\right)=x\times\left[x-\phi\left(p^{m}\right)\right]^{\phi\left(p^{m}\right)-1}$$
and that of $\overline{K}_{\phi\left(p^{m+1}\right)}\cup G_{1}$ is
$$\Theta \left(\overline{K}_{\phi\left(p^{m+1}\right)}\cup G_{1},x\right)=x^{\phi\left(p^{m+1}\right)+1}\times \left[x-\phi\left(p^{m}\right)\right]^{\phi\left(p^{m}\right)-1}.$$
Using Theorem \ref{join}, the Laplacian characteristic polynomial of $G_2$ is
\begin{eqnarray*}
\Theta \left(G_{2},x\right) &= & x \times \left[x-\phi\left(p^{m-1}\right)-\phi\left(p^{m}\right)-\phi\left(p^{m+1}\right)\right]^{\phi\left(p^{m-1}\right)}\\
& & \times \left[x-\phi\left(p^{m-1}\right)-\phi\left(p^{m}\right)\right]^{\phi\left(p^{m}\right)-1}\times \left[x-\phi\left(p^{m-1}\right)\right]^{\phi\left(p^{m+1}\right)}.
\end{eqnarray*}
Now the Laplacian characteristic polynomial of $\overline{K}_{\phi\left(p^{m+2}\right)}\cup G_{2}$ is
\begin{eqnarray*}
\Theta \left(\overline{K}_{\phi\left(p^{m+2}\right)}\cup G_{2},x\right) &=& x^{\phi\left(p^{m+2}\right)+1}\times \left[x-\phi\left(p^{m-1}\right)-\phi\left(p^{m}\right)-\phi\left(p^{m+1}\right)\right]^{\phi\left(p^{m-1}\right)}\\
& & \times \left[x-\phi\left(p^{m-1}\right)-\phi\left(p^{m}\right)\right]^{\phi\left(p^{m}\right)-1}\times\left[x-\phi\left(p^{m-1}\right)\right]^{\phi\left(p^{m+1}\right)}.
\end{eqnarray*}
Again using Theorem \ref{join}, it can be calculated that the Laplacian characteristic polynomial of $G_3$ is
\begin{eqnarray*}
\Theta \left(G_{3},x\right) &= &
x\times \left[x-\phi\left(p^{m-2}\right)-\phi\left(p^{m-1}\right)-\phi\left(p^{m}\right)-\phi\left(p^{m+1}\right)-\phi\left(p^{m+2}\right)\right]^{\phi\left(p^{m-2}\right)}\\
& & \times \left[x-\phi\left(p^{m-2}\right)-\phi\left(p^{m-1}\right)-\phi\left(p^{m}\right)-\phi\left(p^{m+1}\right)\right]^{\phi\left(p^{m-1}\right)}\\
& & \times \left[x-\phi\left(p^{m-2}\right)-\phi\left(p^{m-1}\right)-\phi\left(p^{m}\right)\right]^{\phi\left(p^{m}\right)-1}\\
& & \times \left[x-\phi\left(p^{m-2}\right)-\phi\left(p^{m-1}\right)\right]^{\phi\left(p^{m+1}\right)}\\
& & \times\left[x-\phi\left(p^{m-2}\right)\right]^{\phi\left(p^{m+2}\right)}.
\end{eqnarray*}
Continuing in this way, we finally get that
\begin{eqnarray*}
\Theta \left(G_{m},x\right) &=& x\times \left(x-\sum\limits_{i=1}^{2m-1} \phi\left(p^{i}\right)\right)^{\phi(p)}\times\cdots \times\left(x-\sum\limits_{i=1}^{m+1} \phi \left(p^{i}\right)\right)^{\phi\left(p^{m-1}\right)}\\
& & \times \left(x-\sum\limits_{i=1}^{m} \phi\left(p^{i}\right)\right)^{\phi\left(p^{m}\right)-1}\times \left(x-\sum\limits_{i=1}^{m-1} \phi\left(p^{i}\right)\right)^{\phi\left(p^{m+1}\right)} \\
& & \times \cdots \times \left(x-\sum\limits_{i=1}^{2} \phi \left(p^{i}\right)\right)^{\phi\left(p^{2m-2}\right)}\times \left(x- \phi (p)\right)^{\phi\left(p^{2m-1}\right)}.
\end{eqnarray*}
Since $\Gamma(\mathbb{Z}_{p^{2m}})=G_m$, we have $\Theta \left(\Gamma\left(\mathbb{Z}_{p^{2m}}\right),x\right)=\Theta \left(G_{m},x\right)$. Then the result follows from the above using the fact that
$\underset{i=1}{\overset{r}\sum} \phi\left(p^i\right)=p^r -1$ for any positive integer $r$.\\

(ii) Here $n=p^{2m+1}$ and the proper divisors of $n$ are $p,p^{2},\ldots, p^{2m}$. As in (i), we shall express the graph $\Upsilon_{p^{2m+1}}$ as the join and union of certain graphs.
The vertex $p^{i}$, $1\leq i\leq 2m$, of $\Upsilon_{p^{2m+1}}$ is adjacent to the vertex $p^j$ for every $j\geq 2m+1-i$. Define the graphs $X_1,X_2,\ldots, X_m$ recursively as given below:
\begin{eqnarray*}
X_{1} &=& \left\{p^{m+1}\right\}\vee \left\{p^{m}\right\} \\
X_{2} &=& \left\{p^{m+2}\right\}\vee \left[\left\{p^{m-1}\right\}\cup X_{1} \right] \\
X_{3} &=& \left\{p^{m+3}\right\}\vee\left[\left\{p^{m-2}\right\}\cup X_{2}\right] \\
 &\vdots &  \\
X_{m} &=& \left\{p^{2m}\right\}\vee\left[\{p\}\cup X_{m-1}\right].
\end{eqnarray*}
Then $X_m$ is precisely the graph $\Upsilon_{p^{2m+1}}$. Now define the graphs $Y_1,Y_2,\ldots, Y_m$ recursively as given below:
\begin{eqnarray*}
Y_{1} &=& K_{\phi\left(p^{m}\right)}\vee \overline{K}_{\phi\left(p^{m+1}\right)} \\
Y_{2} &=& K_{\phi\left(p^{m-1}\right)}\vee\left[\overline{K}_{\phi\left(p^{m+2}\right)}\cup Y_{1}\right] \\
Y_{3} &=& K_{\phi\left(p^{m-2}\right)}\vee\left[\overline{K}_{\phi\left(p^{m+3}\right)}\cup Y_{2}\right] \\
 &\vdots &  \\
Y_{m} &=& K_{\phi(p)}\vee\left[\overline{K}_{\phi\left(p^{2m}\right)}\cup Y_{m-1}\right].
\end{eqnarray*}
As in (i), it can be seen that $\Gamma\left(\mathbb{Z}_{p^{2m+1}}\right)$ is precisely the graph $Y_m$.
Using Theorem \ref{join}, we get
$$\Theta\left(Y_{1},x\right)=x\times \left[x-\phi\left(p^m\right)-\phi\left(p^{m+1}\right)\right]^{\phi\left(p^m\right)}\times \left[x-\phi\left(p^m\right)\right]^{\phi\left(p^{m+1}\right)-1}.$$
Starting with $\Theta\left(Y_{1},x\right)$ and applying the argument as in (i), we can calculate the Laplacian characteristic polynomials of $Y_2,Y_3,\ldots ,Y_{m}$ and get the required result.
\end{proof}

As a consequence of Proposition \ref{integral} and Theorem \ref{lapspec-primepower}, we have the following.
\begin{corollary}
If $p$ is a prime and $t\geq 2$, then $\Gamma\left(\mathbb{Z}_{p^t}\right)$ is Lapacian integral and so all the eigenvalues of $\textbf{L}\left(\Upsilon_{p^t}\right)$ are integers.
\end{corollary}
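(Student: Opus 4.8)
The plan is to obtain the corollary as an immediate consequence of Theorem \ref{lapspec-primepower} and Proposition \ref{integral}, with no fresh computation required. First I would read off, from each of the three cases of Theorem \ref{lapspec-primepower} (namely $t=2$; $t=2m$ with $m\geq 2$; and $t=2m+1$ with $m\geq 1$), that every element of $\sigma_L\left(\Gamma\left(\mathbb{Z}_{p^t}\right)\right)$ has the form $p^j-1$ for some nonnegative integer $j$ (with the convention that $j=0$ produces the eigenvalue $0$). Since $p$ is a prime, hence an integer, each such value is an integer; therefore all Laplacian eigenvalues of $\Gamma\left(\mathbb{Z}_{p^t}\right)$ are integers, which is exactly the assertion that $\Gamma\left(\mathbb{Z}_{p^t}\right)$ is Laplacian integral.

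For the second half of the statement I would simply invoke Proposition \ref{integral} with $n=p^t$: it asserts that $\Gamma(\mathbb{Z}_n)$ is Laplacian integral if and only if all eigenvalues of $\textbf{L}(\Upsilon_n)$ are integers. Having just established the left-hand side, the equivalence forces every eigenvalue of $\textbf{L}\left(\Upsilon_{p^t}\right)$ to be an integer. One can also see this more concretely: by Theorem \ref{main} (and the remark following it), the $k$ eigenvalues of $\textbf{L}\left(\Upsilon_{p^t}\right)$ are precisely those Laplacian eigenvalues of $\Gamma\left(\mathbb{Z}_{p^t}\right)$ not already accounted for by the shifted complete/null-graph spectra, and all of these are integral by Theorem \ref{lapspec-primepower}.

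There is essentially no obstacle left at this stage, because the substantive work — the recursive description of $\Upsilon_{p^{2m}}$ and $\Upsilon_{p^{2m+1}}$ as iterated joins and disjoint unions of complete and empty graphs, the repeated application of the join formula of Theorem \ref{join}, and the telescoping identity $\sum_{i=1}^{r}\phi(p^i)=p^r-1$ — is already done inside the proof of Theorem \ref{lapspec-primepower}. The only point needing a little care is bookkeeping of multiplicities: one should check that the exponents listed (in particular the ``$-1$'' at the eigenvalue $p^m-1$, resp. $p^{m+1}-1$, arising from deleting $0$ from $\sigma_L\left(K_{\phi(p^m)}\right)$, resp. $\sigma_L\left(K_{\phi(p^{m+1})}\right)$) add up to $n-\phi(n)-1$, the number of vertices of $\Gamma\left(\mathbb{Z}_{p^t}\right)$; but this is routine and does not affect integrality. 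Accordingly, I would present the proof of the corollary as a two-line deduction rather than attempting an independent eigenvalue analysis of $\textbf{L}\left(\Upsilon_{p^t}\right)$, which is exactly the detour Theorem \ref{lapspec-primepower} was designed to avoid.
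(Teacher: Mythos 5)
Your proposal is correct and follows exactly the paper's route: the paper states this corollary as an immediate consequence of Theorem \ref{lapspec-primepower} (whose explicit spectra consist of eigenvalues of the form $p^j-1$, hence integers) together with the equivalence in Proposition \ref{integral}, which then forces all eigenvalues of $\textbf{L}\left(\Upsilon_{p^t}\right)$ to be integers. Nothing further is needed.
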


\begin{corollary}\label{coro-lapspec}
Let $n=p^{t}$ for some prime $p$ and positive integer $t$ with $n\neq 4$. Then $\lambda(\Gamma(\mathbb{Z}_{p^t}))=\vert\Gamma(\mathbb{Z}_{p^t})\vert$.
\end{corollary}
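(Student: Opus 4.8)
The plan is to reduce the statement to the well-known equality case of the bound $\lambda(G)\le |V(G)|$. Recall from the introduction that $\lambda(G)=|V(G)|-\mu(\overline{G})$ and that $\mu(\overline{G})=0$ exactly when $\overline{G}$ is disconnected; equivalently, $\lambda(G)=|V(G)|$ if and only if $G$ is the join of two graphs on nonempty vertex sets. Writing $n=p^t$, the number of vertices of $\Gamma(\mathbb{Z}_{p^t})$ is $n-\phi(n)-1=p^{t-1}-1$, so what must be shown is that $\Gamma(\mathbb{Z}_{p^t})$ is a nontrivial join whenever $p^t\neq 4$.

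First I would dispose of the case $t=2$. Here $\Gamma(\mathbb{Z}_{p^2})=\Gamma(A_p)=K_{p-1}$ by Corollary \ref{struc-coro}(i). If $p\geq 3$, then $K_{p-1}=K_1\vee K_{p-2}$ with $p-2\geq 1$ is a nontrivial join, so $\lambda=p-1=|\Gamma(\mathbb{Z}_{p^2})|$; if $p=2$, then $\Gamma(\mathbb{Z}_4)=K_1$ has $\lambda=0\neq 1$, which is precisely why $n=4$ must be excluded.

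For $t\geq 3$ I would single out the set $A_{p^{t-1}}$, which has $\phi(p)=p-1$ vertices by Lemma \ref{mathew}. By Corollary \ref{struc-coro}(i) the induced subgraph $\Gamma(A_{p^{t-1}})$ is the complete graph $K_{p-1}$, since $p^t\mid (p^{t-1})^2=p^{2t-2}$. Moreover, in $\Upsilon_{p^t}$ the vertex $p^{t-1}$ is adjacent to $p^{j}$ for every $j$ with $1\leq j\leq t-2$, because $p^t\mid p^{t-1}\cdot p^{j}=p^{t-1+j}$; hence, by Lemma \ref{struc}, every vertex of $A_{p^{t-1}}$ is adjacent to every vertex of $A_{p^{j}}$ for $1\leq j\leq t-2$. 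Combining the two observations, each vertex of $A_{p^{t-1}}$ is adjacent to every other vertex of $\Gamma(\mathbb{Z}_{p^t})$, so $\Gamma(\mathbb{Z}_{p^t})=K_{\phi(p)}\vee \Gamma'$, where $\Gamma'$ is the induced subgraph on $V(\Gamma(\mathbb{Z}_{p^t}))\setminus A_{p^{t-1}}$. This graph $\Gamma'$ has $(p^{t-1}-1)-(p-1)=p(p^{t-2}-1)>0$ vertices when $t\geq 3$, so the join is nontrivial, $\overline{\Gamma(\mathbb{Z}_{p^t})}$ is disconnected, and therefore $\lambda(\Gamma(\mathbb{Z}_{p^t}))=p^{t-1}-1=|\Gamma(\mathbb{Z}_{p^t})|$.

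Alternatively, and even more quickly, the statement can be read off directly from Theorem \ref{lapspec-primepower}: using $\sum_{i=1}^{r}\phi(p^{i})=p^{r}-1$, the largest Laplacian eigenvalue listed there is $p-1$ in case (i) with $p\geq 3$, it is $p^{2m-1}-1$ in case (ii), and $p^{2m}-1$ in case (iii), and in each instance this equals $p^{t-1}-1=|\Gamma(\mathbb{Z}_{p^t})|$. There is no real obstacle in either route; the only point requiring care is the bookkeeping around the exceptional value $n=4$, where $\Gamma(\mathbb{Z}_4)$ is a single vertex and the equality fails.
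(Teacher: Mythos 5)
Your proposal is correct, and it actually contains two valid arguments. Your ``alternative'' route at the end is exactly the paper's own proof: the paper simply computes $\vert\Gamma(\mathbb{Z}_{p^t})\vert=p^t-\phi(p^t)-1=p^{t-1}-1$ and reads off from Theorem \ref{lapspec-primepower} that the largest Laplacian eigenvalue is $p^{t-1}-1$ in every case (the excluded case $n=4$ being the one where the spectrum is just $\{0\}$). Your main route is genuinely different: instead of invoking the explicit spectrum, you show that $\Gamma(\mathbb{Z}_{p^t})$ is a nontrivial join --- $K_{p-1}=K_1\vee K_{p-2}$ when $t=2$, $p\geq 3$, and $K_{\phi(p)}\vee\Gamma'$ when $t\geq 3$, since every vertex of $A_{p^{t-1}}$ is adjacent to all other vertices by Lemma \ref{struc} and Corollary \ref{struc-coro}(i) --- and then apply Fiedler's equality characterization (Theorem \ref{spec-rad-equal}). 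Your adjacency and counting checks ($p^t\mid p^{t-1+j}$ for $j\geq 1$, and $\vert V(\Gamma')\vert=p(p^{t-2}-1)>0$ for $t\geq 3$) are correct, so there is no gap. What each approach buys: the paper's proof is a one-line consequence of the heavy lifting already done in Theorem \ref{lapspec-primepower}, while your join argument is more elementary and self-contained, needing only the structural lemmas of Section \ref{sec-srtucture} plus the cited Fiedler result; in fact it anticipates the very observation (the vertex $p^{t-1}$ is adjacent to all other vertices) that the paper uses later in the proof of Proposition \ref{com-disconn}, so your route in effect derives the prime-power case of Proposition \ref{equal-m} without the full spectrum computation, at the cost of not yielding the individual eigenvalues.
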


\begin{proof}
We have $\left\vert\Gamma\left(\mathbb{Z}_{p^t}\right)\right\vert=p^t-\phi(p^t)-1=p^{t-1}-1$. From Theorem \ref{lapspec-primepower}, we get that $\lambda(\Gamma(\mathbb{Z}_{p^t}))=p^{t-1}-1$ and so the corollary follows.
\end{proof}

\section{Algebraic connectivity and Laplacian spectral radius of $\Gamma(\mathbb{Z}_{n})$}\label{sec-alg-con}

In this section, we shall study the algebraic connectivity and the Laplacian spectral radius of $\Gamma(\mathbb{Z}_{n})$. We recall two well-known bounds for the Laplacian spectral radius of a graph.

\begin{theorem}\cite{fei}\label{spec-rad-equal}
If $G$ is a graph on $m$ vertices, then  $\lambda(G) \leq m$. Further, equality holds if and only if $\overline{G}$ is disconnected if and only if $G$ is the join of two graphs.
\end{theorem}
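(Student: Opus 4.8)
The plan is to derive everything from Fiedler's identity $\lambda(G)=m-\mu(\overline{G})$ recalled in the introduction, together with two facts about the Laplacian noted there: $L(\overline{G})$ is positive semidefinite, and the Perron--Frobenius argument applied to $(m-1)I-L(\overline{G})$ shows that $\mu(\overline{G})>0$ exactly when $\overline{G}$ is connected. Since $L(\overline{G})$ is positive semidefinite we have $\mu(\overline{G})\geq 0$, and hence $\lambda(G)=m-\mu(\overline{G})\leq m$, which gives the inequality.

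For the equality, $\lambda(G)=m$ holds if and only if $\mu(\overline{G})=0$, and by the Perron--Frobenius remark this is equivalent to $\overline{G}$ being disconnected. This is exactly the second ``if and only if'' of the statement, read from right to left, so it remains only to check the purely combinatorial equivalence: $\overline{G}$ is disconnected if and only if $G$ is the join of two graphs.

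To see this, suppose first that $\overline{G}$ is disconnected, and partition $V(G)$ into two nonempty sets $V_{1}$ and $V_{2}$, each a union of connected components of $\overline{G}$. Then no non-edge of $G$ joins $V_{1}$ to $V_{2}$, i.e.\ every vertex of $V_{1}$ is adjacent in $G$ to every vertex of $V_{2}$, so $G$ is the join of the subgraphs of $G$ induced on $V_{1}$ and on $V_{2}$. Conversely, if $G=G_{1}\vee G_{2}$ with $G_{1},G_{2}$ nonempty, then in $\overline{G}$ there is no edge between $V(G_{1})$ and $V(G_{2})$, so $\overline{G}$ is the disjoint union of $\overline{G_{1}}$ and $\overline{G_{2}}$ and is therefore disconnected.

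There is no genuinely hard step here; the only points needing a little care are the degenerate cases (for instance $G$ null or complete, or $m=1$), which are checked directly, and making sure the Perron--Frobenius statement about $\mu$ is applied only when $\overline{G}$ has at least two vertices. If one prefers an argument not quoting Fiedler's identity, the same two conclusions follow from the Rayleigh quotient $\lambda(G)=\max_{\|x\|=1}x^{T}L(G)x$ together with $L(G)+L(\overline{G})=L(K_{m})$ and the identity $x^{T}L(K_{m})x=m\|x\|^{2}-(\textbf{1}^{T}x)^{2}$: the bound is then immediate, and at equality the maximizing vector must lie in the kernel of $L(\overline{G})$ and be orthogonal to $\textbf{1}$, which forces $\overline{G}$ to have at least two components and hence $G$ to be a join.
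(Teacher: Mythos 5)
Your argument is correct and follows essentially the same route as the paper, which disposes of this cited theorem by the one-line remark that it follows from Fiedler's identity $\lambda(G)=m-\mu(\overline{G})$ together with the fact that $\overline{G}$ is disconnected if and only if $G$ is a join of two graphs. You have simply filled in the details (nonnegativity of $\mu(\overline{G})$, the Perron--Frobenius criterion for $\mu(\overline{G})=0$, and the elementary complement/join equivalence), all of which are sound.
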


The above theorem follows from the relation $\lambda(G) = m -\mu(\overline{G})$ and the fact that $\overline{G}$ is disconnected if and only if $G$ is the join of two graphs. The following result was proved in \cite[Theorem 2.3]{zhang}.

\begin{theorem}\cite{zhang}\label{spec-rad-maxd}
Let $G$ be a connected graph on $m$ vertices with maximal degree $\Delta(G)$. Then $\lambda(G)\geq \Delta(G) +1$, and equality holds if and only if $\Delta(G) =m-1$.
\end{theorem}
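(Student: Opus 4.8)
The plan is to obtain the inequality from Rayleigh's principle applied to the Laplacian, using a well-chosen test vector, and then to settle the equality case by combining a strictness argument with Theorem~\ref{spec-rad-equal}. Recall that $\lambda(G)=\max_{x\neq 0}\frac{x^{T}L(G)x}{x^{T}x}$ and that $x^{T}L(G)x=\sum_{\{i,j\}\in E(G)}(x_{i}-x_{j})^{2}$ for every vector $x$ indexed by $V(G)$. Since $G$ is connected on $m\geq 2$ vertices we have $\Delta:=\Delta(G)\geq 1$, so all the denominators appearing below are positive.

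First I would prove the bound $\lambda(G)\geq\Delta+1$. Fix a vertex $v$ with $\deg(v)=\Delta$ and neighbourhood $N(v)$, and define $x$ by $x_{v}=\Delta$, $x_{u}=-1$ for each $u\in N(v)$, and $x_{w}=0$ for all remaining vertices $w$. Then $x^{T}x=\Delta^{2}+\Delta=\Delta(\Delta+1)$. In the quadratic form $x^{T}L(G)x$, each of the $\Delta$ edges incident to $v$ contributes $(\Delta+1)^{2}$, each edge with both endpoints in $N(v)$ contributes $0$, each edge joining a vertex of $N(v)$ to a vertex of $V(G)\setminus(N(v)\cup\{v\})$ contributes $1$, and every other edge contributes $0$; writing $e'$ for the number of edges of the third type gives $x^{T}L(G)x=\Delta(\Delta+1)^{2}+e'$. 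Therefore
$$\lambda(G)\ \geq\ \frac{\Delta(\Delta+1)^{2}+e'}{\Delta(\Delta+1)}\ =\ (\Delta+1)+\frac{e'}{\Delta(\Delta+1)}\ \geq\ \Delta+1 .$$

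For the equality characterisation, the direction $\Delta=m-1\Rightarrow\lambda(G)=\Delta+1$ is immediate: a vertex of degree $m-1$ is adjacent to all others, so $G$ is the join of a single vertex with $G-v$, and Theorem~\ref{spec-rad-equal} yields $\lambda(G)=m=\Delta+1$. For the converse I would argue by contraposition: assume $\Delta\leq m-2$, so that some vertex $w$ lies outside $N(v)\cup\{v\}$. As $G$ is connected, choose a path from $v$ to $w$ and let $z_{j}$ be its first vertex lying outside $N(v)\cup\{v\}$; its predecessor $z_{j-1}$ cannot be $v$ (otherwise $z_{j}\in N(v)$), and by minimality of $j$ it lies in $N(v)\cup\{v\}$, hence $z_{j-1}\in N(v)$ and the edge $z_{j-1}z_{j}$ is counted by $e'$. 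Thus $e'\geq 1$, and the displayed chain improves to $\lambda(G)\geq(\Delta+1)+\frac{1}{\Delta(\Delta+1)}>\Delta+1$, so equality forces $\Delta=m-1$.

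The two quadratic-form evaluations are routine bookkeeping and the ``easy'' equality direction is essentially a one-line application of Theorem~\ref{spec-rad-equal}. The one genuinely delicate point, and hence the main obstacle, is the strictness step in the equality analysis: one must use connectivity to guarantee that at least one edge leaves the closed neighbourhood of a maximum-degree vertex, since otherwise $e'$ could vanish even when $\Delta<m-1$ and the argument would not distinguish the equality case.
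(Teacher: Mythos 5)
Your proof is correct, and it is worth noting at the outset that the paper itself contains no proof of this statement: it is quoted verbatim from \cite[Theorem 2.3]{zhang}, so there is no internal argument to compare against. Your computation checks out at every step: with the test vector $x_v=\Delta$, $x_u=-1$ on $N(v)$, zero elsewhere, one indeed gets $x^Tx=\Delta(\Delta+1)$ and $x^TL(G)x=\Delta(\Delta+1)^2+e'$, the Rayleigh quotient then gives $\lambda(G)\geq(\Delta+1)+\tfrac{e'}{\Delta(\Delta+1)}$, and your path argument correctly shows $e'\geq 1$ whenever $\Delta\leq m-2$, which settles strictness; the easy direction via $G=\{v\}\vee(G-v)$ and Theorem~\ref{spec-rad-equal} is also fine (alternatively, $\lambda(G)\leq m$ combined with your lower bound gives it without the join language). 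Compared with the usual textbook route --- observing that $G$ contains the star $K_{1,\Delta}$ together with isolated vertices as a spanning subgraph, so that monotonicity of Laplacian eigenvalues under edge addition gives $\lambda(G)\geq\lambda(K_{1,\Delta})=\Delta+1$, with the equality case handled separately --- your single Rayleigh-quotient computation has the advantage of producing the quantitative surplus term $e'/(\Delta(\Delta+1))$, which makes the equality analysis a one-line connectivity argument rather than a separate eigenvector or subgraph discussion. The only caveat, which you already flagged, is that the statement implicitly requires $m\geq 2$ (equivalently $\Delta\geq 1$), since for an isolated vertex the inequality fails; this is a defect of the quoted statement, not of your argument.
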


The following proposition characterizes the values of $n$ for which the complement graph of $\Gamma(\mathbb{Z}_{n})$ is disconnected. Note that if $n=4$, then $\Gamma(\mathbb{Z}_{4})=\overline{\Gamma(\mathbb{Z}_{4})}=\{2\}$ is a singleton.

\begin{proposition}\label{com-disconn}
$\overline{\Gamma(\mathbb{Z}_{n})}$ is disconnected if and only if $n$ is a product of two distinct primes or $n$ is a prime power with $n\neq 4$.
\end{proposition}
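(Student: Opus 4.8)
The plan is to use the classical fact recalled just before Theorem \ref{spec-rad-equal}, namely that for a graph $G$ the complement $\overline{G}$ is disconnected precisely when $G$ is the join of two nonempty graphs. So the task reduces to determining for which $n$ the graph $\Gamma(\mathbb{Z}_n)$ is a join. First I would dispose of the two families in the statement. For $n=pq$ with distinct primes $p,q$, Example \ref{lapspec-dis}(i) gives $\Gamma(\mathbb{Z}_{pq})=\overline{K}_{\phi(p)}\vee\overline{K}_{\phi(q)}$, a join, so $\overline{\Gamma(\mathbb{Z}_{pq})}$ is disconnected. For $n=p^t$ with $n\neq4$, Corollary \ref{coro-lapspec} gives $\lambda(\Gamma(\mathbb{Z}_{p^t}))=|\Gamma(\mathbb{Z}_{p^t})|$, so Theorem \ref{spec-rad-equal} shows $\Gamma(\mathbb{Z}_{p^t})$ is a join and $\overline{\Gamma(\mathbb{Z}_{p^t})}$ is disconnected; and if $n=4$ then $\Gamma(\mathbb{Z}_4)$ is a single vertex, so its complement is trivially connected.

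For the converse, suppose $n$ has at least two distinct prime divisors and is not a product of two distinct primes; write $n=p_1^{a_1}\cdots p_r^{a_r}$ with $r\geq2$. Then $k:=|V(\Upsilon_n)|=\prod_i(a_i+1)-2\geq2$, and by Lemma \ref{divisorgraph} we have $\Gamma(\mathbb{Z}_n)=\Upsilon_n[\Gamma(A_{d_1}),\ldots,\Gamma(A_{d_k})]$ with every $A_{d_i}$ nonempty. The key reduction I would establish is that such a generalized join (with $k\geq2$ and all parts nonempty) is a join of two nonempty graphs if and only if $\Upsilon_n$ itself is one. The ``if'' direction is immediate from Definition \ref{gen-join}. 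For ``only if'', suppose $\Gamma(\mathbb{Z}_n)=P\vee Q$ with $P,Q$ nonempty. If some $A_{d_i}$ meets both $P$ and $Q$, pick $u\in A_{d_i}\cap P$ and $v\in A_{d_i}\cap Q$; for each $j\neq i$ and any $w\in A_{d_j}$, one of $u\sim w$ (when $w\in Q$) or $v\sim w$ (when $w\in P$) holds, which by Lemma \ref{struc} forces $d_i\sim d_j$ in $\Upsilon_n$, so $d_i$ is adjacent to every other vertex and $\Upsilon_n=\{d_i\}\vee(\Upsilon_n-d_i)$ is a join (using $k\geq2$). Otherwise each $A_{d_i}$ lies entirely in $P$ or in $Q$, and the resulting partition $\{d_i:A_{d_i}\subseteq P\}\sqcup\{d_i:A_{d_i}\subseteq Q\}$ of $V(\Upsilon_n)$ is, again by Lemma \ref{struc}, a join-partition of $\Upsilon_n$.

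It thus remains to show $\Upsilon_n$ is not a join, equivalently that $\overline{\Upsilon_n}$ is connected. I would argue that the prime divisors $p_1,\ldots,p_r$ are vertices of $\Upsilon_n$ forming a clique in $\overline{\Upsilon_n}$: indeed $n\nmid p_ip_j$ for $i\neq j$, since if $r\geq3$ some prime of $n$ is absent from $p_ip_j$, while if $r=2$ the hypothesis $n\neq p_1p_2$ forces $a_1\geq2$ or $a_2\geq2$. Moreover every proper divisor $d$ of $n$ either equals some $p_j$ or is adjacent in $\overline{\Upsilon_n}$ to some $p_j$: if neither, then $n\mid dp_j$, i.e.\ $(n/p_j)\mid d$, for all $j$, hence $\operatorname{lcm}_j(n/p_j)\mid d$; but since $r\geq2$ the $p_i$-exponent of $\operatorname{lcm}_j(n/p_j)$ equals $a_i$ for each $i$, so this lcm is $n$, contradicting $d<n$. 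Hence $\overline{\Upsilon_n}$ is connected, so $\Upsilon_n$ is not a join, and by the reduction $\Gamma(\mathbb{Z}_n)$ is not a join, so $\overline{\Gamma(\mathbb{Z}_n)}$ is connected.

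The step I expect to require the most care is the reduction lemma linking join decompositions of $\Gamma(\mathbb{Z}_n)$ to those of $\Upsilon_n$ — in particular handling the possibility that some class $A_{d_i}$ of the equitable partition is split between the two sides of the join; everything after that is routine bookkeeping with divisors and least common multiples.
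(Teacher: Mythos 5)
Your proof is correct, but it reaches the conclusion by a genuinely different route at the two key points, so a comparison is worthwhile. For the forward direction on prime powers the paper argues directly: $\Gamma(\mathbb{Z}_{p^2})$ is a complete graph on at least two vertices when $p\geq 3$, and for $t\geq 3$ the vertex $p^{t-1}$ is adjacent to every other vertex, so the complement has an isolated vertex; you instead invoke Corollary \ref{coro-lapspec} together with the equality case of Theorem \ref{spec-rad-equal}. That is legitimate and non-circular, since Corollary \ref{coro-lapspec} rests only on Section 4, but it is a much heavier tool than the one-line observation it replaces. For the converse, both arguments hinge on showing that $\overline{\Upsilon_n}$ is connected, and your clique-of-primes argument is essentially the paper's (your lcm computation plays the role of the paper's valuation argument showing each non-prime proper divisor is non-adjacent in $\Upsilon_n$ to some $p_j$; both work). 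The genuine difference is how connectedness of $\overline{\Upsilon_n}$ is transferred to $\overline{\Gamma(\mathbb{Z}_n)}$: the paper uses the identity $\overline{\Gamma(\mathbb{Z}_{n})}=\overline{\Upsilon_n}\left[\overline{\Gamma(A_{d_{1}})},\ldots ,\overline{\Gamma(A_{d_{k}})}\right]$ and then applies Lemma \ref{connected} with $k\geq 2$, while you prove a reduction lemma asserting that the generalized join $\Upsilon_n[\Gamma(A_{d_1}),\ldots,\Gamma(A_{d_k})]$ is a join of two nonempty graphs only if $\Upsilon_n$ is, by splitting into the case where some class $A_{d_i}$ meets both sides of the join (forcing $d_i$ to dominate $\Upsilon_n$, via Lemma \ref{struc}) and the case where the join partition respects the classes (transferring it to $\Upsilon_n$, again via Lemma \ref{struc}). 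Your reduction is correct and has the mild advantage of never forming the complement of the generalized join; the paper's route is shorter because the complement identity plus Lemma \ref{connected} do that work in one line.
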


\begin{proof}
If $n=pq$ for distinct primes $p$ and $q$, then $\Gamma(\mathbb{Z}_{n})=K_{\phi(p),\phi(q)}$, see Example \ref{lapspec-dis}(i). If $n=p^{2}$ for some prime $p\geq 3$, then $\Gamma(\mathbb{Z}_{n})=\Gamma(A_{p})=K_{\phi(p)}$ by Corollary \ref{struc-coro}(i) and it contains at least two vertices. If $n=p^t$ for some prime $p$ with $t \geq 3$, then the vertex $p^{t-1}$ is adjacent to all other vertices of $\Gamma(\mathbb{Z}_{n})$. In all the three cases, it follows that $\overline{\Gamma(\mathbb{Z}_{n})}$ is disconnected.

Conversely, let $n=p_1^{n_1}p_2^{n_2}\cdots p_r^{n_r}$, where $r$, $n_1,n_2,\ldots ,n_r$ are positive integers and $p_1,p_2,\ldots, p_r$ are distinct primes. Suppose that $r\geq 2$ and that $n_{1}>1$ or $n_{2}>1$ if $r=2$. We show that $\overline{\Gamma(\mathbb{Z}_{n})}$ is connected.

The vertices $p_i$ and $p_j$ are not adjacent in $\Upsilon_n$ for $1\leq i\neq j\leq k$. So the vertices $p_1,p_2,\ldots, p_{r}$ form a clique in $\overline{\Upsilon_n}$.

Let $v$ be vertex of $\overline{\Upsilon_n}$ different from $p_1,p_2,\ldots, p_{r}$. There exists $i\in\{1,2,\ldots,r\}$ such that $p_i^t$ divides $v$, but $p_i^{t+1}$ does not divide $v$ for some $t$ with $0\leq t < n_i$. Then, for $j\in\{1,2,\ldots,r\}\setminus\{i\}$, $v$ and $p_j$ are not adjacent in $\Upsilon_n$ as $n$ does not divide $vp_j$ and so $v$ and $p_j$ are adjacent in $\overline{\Upsilon_n}$.
It follows that $\overline{\Upsilon_n}$ is connected. If $d_1,d_2,\ldots, d_k$ are the proposer divisors of $n$, then
$\Gamma(\mathbb{Z}_{n})=\Upsilon_n\left[\Gamma(A_{d_{1}}),\Gamma(A_{d_{2}}),\ldots ,\Gamma(A_{d_{k}})\right]$ implies that   $\overline{\Gamma(\mathbb{Z}_{n})}=\overline{\Upsilon_n}\left[\overline{\Gamma(A_{d_{1}})},\overline{\Gamma(A_{d_{2}})},\ldots ,\overline{\Gamma(A_{d_{k}})}\right]$. As $k\geq 2$, Lemma \ref{connected} implies that $\overline{\Gamma(\mathbb{Z}_{n})}$ is connected.
\end{proof}

The following proposition characterizes the values of $n$ for which equality holds in Theorem \ref{spec-rad-equal} when $G=\Gamma(\mathbb{Z}_n)$.

\begin{proposition}\label{equal-m}
$\lambda(\Gamma(\mathbb{Z}_{n}))=\vert\Gamma(\mathbb{Z}_{n})\vert$ if and only if $n$ is a product of two distinct primes or $n$ is a prime power with $n\neq 4$.
\end{proposition}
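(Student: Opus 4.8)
The plan is to obtain the result immediately by chaining two facts already established in the excerpt: the Fiedler-type upper bound of Theorem~\ref{spec-rad-equal} together with its equality case, and the characterization of disconnectedness of $\overline{\Gamma(\mathbb{Z}_n)}$ given in Proposition~\ref{com-disconn}. Write $m=|\Gamma(\mathbb{Z}_n)|=n-\phi(n)-1$ for the number of vertices of $\Gamma(\mathbb{Z}_n)$. By Theorem~\ref{spec-rad-equal} applied to $G=\Gamma(\mathbb{Z}_n)$, we have $\lambda(\Gamma(\mathbb{Z}_n))=m$ if and only if $\overline{\Gamma(\mathbb{Z}_n)}$ is disconnected (equivalently, $\Gamma(\mathbb{Z}_n)$ is a join of two graphs). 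Proposition~\ref{com-disconn} says that $\overline{\Gamma(\mathbb{Z}_n)}$ is disconnected exactly when $n$ is a product of two distinct primes or $n$ is a prime power with $n\neq 4$. Composing these two equivalences yields the claim.

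The only point needing a word of care is the degenerate case $n=4$, where $\Gamma(\mathbb{Z}_4)$ consists of the single vertex $2$: here $m=1$, the unique Laplacian eigenvalue is $0$, so $\lambda(\Gamma(\mathbb{Z}_4))=0\neq 1=m$, which is consistent with $n=4$ being excluded on the right-hand side of the statement. For every other value of $n$ the two cited results apply verbatim and no separate analysis is needed.

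As a cross-check, one can also see both directions concretely. For the ``if'' direction: when $n=pq$, Example~\ref{lapspec-dis}(i) gives $\Gamma(\mathbb{Z}_{pq})=K_{\phi(p),\phi(q)}$ with largest Laplacian eigenvalue $(p-1)+(q-1)=|\Gamma(\mathbb{Z}_{pq})|$, and when $n=p^t$ with $n\neq 4$, Corollary~\ref{coro-lapspec} gives $\lambda(\Gamma(\mathbb{Z}_{p^t}))=|\Gamma(\mathbb{Z}_{p^t})|$ directly. For the ``only if'' direction: if $n$ is of neither form, then by Proposition~\ref{com-disconn} the graph $\overline{\Gamma(\mathbb{Z}_n)}$ is connected, hence $\mu(\overline{\Gamma(\mathbb{Z}_n)})>0$, and the Fiedler identity $\lambda(\Gamma(\mathbb{Z}_n))=m-\mu(\overline{\Gamma(\mathbb{Z}_n)})$ forces $\lambda(\Gamma(\mathbb{Z}_n))<m$. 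There is no genuine obstacle in this proof; the substantive work is already carried out in the proof of Proposition~\ref{com-disconn}.
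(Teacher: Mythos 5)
Your proposal is correct, and it uses the same two ingredients as the paper (Theorem~\ref{spec-rad-equal} and Proposition~\ref{com-disconn}) but packages them slightly differently. The paper's proof treats the two directions asymmetrically: for the ``only if'' direction it argues exactly as you do, using Proposition~\ref{com-disconn} to get connectedness of $\overline{\Gamma(\mathbb{Z}_n)}$ and then the strict inequality from Theorem~\ref{spec-rad-equal}, with $n=4$ checked by hand; but for the ``if'' direction it does not invoke the equality case of Theorem~\ref{spec-rad-equal} at all, and instead computes $\lambda(\Gamma(\mathbb{Z}_n))$ explicitly, citing Corollary~\ref{coro-lapspec} for prime powers $n\neq 4$ and Example~\ref{lapspec-dis}(i) for $n=pq$. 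Your primary route --- chaining the full equivalence ``$\lambda(G)=m$ if and only if $\overline{G}$ is disconnected'' with Proposition~\ref{com-disconn} --- yields both directions at once and is marginally more economical, needing no spectral computation beyond what is already in Proposition~\ref{com-disconn}; what the paper's version buys is explicit knowledge of the value of $\lambda$ in each case, which it has anyway from Section~\ref{sec-integrality} and Example~\ref{lapspec-dis}. Your treatment of $n=4$ is the right point of care: for a one-vertex graph the identity $\lambda(G)=m-\mu(\overline{G})$ is vacuous, so verifying $\lambda(\Gamma(\mathbb{Z}_4))=0<1$ directly (as the paper also does) is the clean way to dispose of it. Finally, note that your ``cross-check'' paragraph essentially reproduces the paper's own proof, so the two arguments fully overlap in content.
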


\begin{proof}
If $n$ is not a product of two distinct primes nor a prime power, then $\overline{\Gamma(\mathbb{Z}_{n})}$ is connected by Proposition \ref{com-disconn}. In this case, $\lambda(\Gamma(\mathbb{Z}_{n}))< \vert\Gamma(\mathbb{Z}_{n})\vert$ by Theorem \ref{spec-rad-equal}. If $n=4$, then $\lambda(\Gamma(\mathbb{Z}_{4}))=0<1=\vert\Gamma(\mathbb{Z}_{4})\vert$.

If $n$ is a prime power with $n\neq 4$, then $\lambda(\Gamma(\mathbb{Z}_{n}))=\vert\Gamma(\mathbb{Z}_{n})\vert$ by Corollary \ref{coro-lapspec}.
Assume that $n=pq$ for two distinct primes $p$ and $q$. Then $\vert\Gamma(\mathbb{Z}_{pq})\vert=pq-\phi(pq)-1=p+q-2$. From Example \ref{lapspec-dis}(i), we have $\lambda(\Gamma(\mathbb{Z}_{pq}))=p+q-2$ and so $\lambda(\Gamma(\mathbb{Z}_{pq}))=\vert\Gamma(\mathbb{Z}_{pq})\vert$.
\end{proof}

The following theorem was proved in \cite[Theorem 3.2]{ju}, which determines the vertex connectivity $\kappa(\Gamma(\mathbb{Z}_{n}))$ of $\Gamma(\mathbb{Z}_{n})$.

\begin{theorem}\cite{ju}\label{ver-con}
Let $p$ be the smallest prime divisor of $n$ and let $\delta\left(\Gamma\left(\mathbb{Z}_{n}\right)\right)$ denote the minimal degree of $\Gamma\left(\mathbb{Z}_{n}\right)$. Then the following hold:
\begin{enumerate}[(i)]
\item If $n$ is divisible by at least two distinct primes, then $\kappa\left(\Gamma\left(\mathbb{Z}_{n}\right)\right)=\delta\left(\Gamma\left(\mathbb{Z}_{n}\right)\right)=p-1$ and the vertex $p$ has minimal degree.
\item Let $n=p^t$ with $t\geq 2$. Then $\kappa\left(\Gamma\left(\mathbb{Z}_{n}\right)\right)=\delta\left(\Gamma\left(\mathbb{Z}_{n}\right)\right)=p-2$ if $t=2$, and $\kappa\left(\Gamma\left(\mathbb{Z}_{n}\right)\right)=\delta\left(\Gamma\left(\mathbb{Z}_{n}\right)\right)=p-1$ if $t>2$. In both cases, the vertex $p$ has minimal degree.
\end{enumerate}
\end{theorem}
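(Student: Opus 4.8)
The plan is to establish the two equalities $\delta(\Gamma(\mathbb{Z}_n))=p-1$ (respectively $p-2$ when $n=p^2$) and $\kappa(\Gamma(\mathbb{Z}_n))=\delta(\Gamma(\mathbb{Z}_n))$ separately, working throughout with the generalized join description $\Gamma(\mathbb{Z}_n)=\Upsilon_n[\Gamma(A_{d_1}),\dots,\Gamma(A_{d_k})]$ of Lemma~\ref{divisorgraph}. Since $\kappa(G)\le\delta(G)$ for every graph, the real work is (a) to compute $\delta$ and exhibit a vertex of minimum degree, and (b) to prove the reverse inequality $\kappa\ge\delta$.

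For (a): by the join structure a vertex $x\in A_{d_i}$ has $\deg(x)=M_{d_i}$ if $\Gamma(A_{d_i})$ is a null graph and $\deg(x)=M_{d_i}+\phi(n/d_i)-1$ if $\Gamma(A_{d_i})$ is complete, where $M_{d_i}$ is the weighted neighbour sum from \eqref{m-d-j} and Corollary~\ref{struc-coro}(i) tells us which alternative holds ($\Gamma(A_{d_i})=K_{\phi(n/d_i)}$ exactly when $n\mid d_i^2$). A short $p$-adic valuation argument shows that $n/p$ is the unique proper divisor $d\neq p$ with $n\mid pd$, so $N_{\Upsilon_n}(p)=\{n/p\}$ and $M_p=\phi(p)=p-1$; combined with the fact that $\Gamma(A_p)$ is complete if and only if $n=p^2$, this yields $\deg(p)=p-1$ for $n\neq p^2$ and $\deg(p)=p-2$ for $n=p^2$. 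That $p$ attains the minimum follows because for any proper divisor $d>1$ we have $d\sim n/d$ in $\Upsilon_n$ whenever $d\neq n/d$, whence $\deg(y)\ge|A_{n/d}|=\phi(d)\ge p-1$ for $y\in A_d$ (the smallest prime factor $q$ of $d$ satisfies $q\ge p$ and $\phi(d)\ge q-1$); the remaining case $n=d^2$ uses that $\Gamma(A_d)$ is then complete, so $\deg(y)\ge M_d+\phi(d)-1\ge p-1$, because $M_d\ge1$ by connectedness of $\Upsilon_n$ (Lemma~\ref{UP}) unless $\Upsilon_n$ is a single vertex, i.e.\ $n=p^2$.

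For (b) I would argue in three cases. If $n=p^2$ then $\Gamma(\mathbb{Z}_n)=K_{p-1}$ and $\kappa=p-2$. If $n=p^t$ with $t\ge3$, then by Lemma~\ref{struc} every vertex of $A_{p^{t-1}}$ is adjacent to every other vertex of $\Gamma(\mathbb{Z}_n)$ (every zero divisor of $\mathbb{Z}_{p^t}$ is a multiple of $p$), so $A_{p^{t-1}}$ is a set of $\phi(p)=p-1$ universal vertices; deleting any $p-2$ vertices leaves one of them and hence a connected graph, so $\kappa\ge p-1$. The substantive case is $n$ divisible by at least two distinct primes: given any $S$ with $|S|\le p-2$, I would show $\Gamma(\mathbb{Z}_n)-S$ is connected by routing through $A_{n/p}$. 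Since $|A_{n/p}|=\phi(p)=p-1>|S|$, pick $v\in A_{n/p}\setminus S$; by Lemma~\ref{struc}, $v$ is adjacent to every zero divisor divisible by $p$. Any zero divisor $y\notin S$ not divisible by $p$ has $(y,n)=d$ with $p\nmid d$, hence is adjacent to the entire class $A_{n/d}$, which consists of multiples of $p$ and has $\phi(d)\ge p-1>|S|$ vertices; thus a surviving vertex of $A_{n/d}$ links $y$ into the component of $v$ (if $p^2\nmid n$ and $y\in A_{n/p}$, the same works using $A_p$ in place of $A_{n/d}$, while if $p^2\mid n$ then $A_{n/p}$ is already a clique lying among the multiples of $p$). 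Hence $\Gamma(\mathbb{Z}_n)-S$ is connected and has more than one vertex, giving $\kappa\ge p-1=\delta$.

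The main obstacle is the connectivity argument of case (b) for $n$ with at least two prime divisors: it requires keeping careful track of which classes $A_d$ consist of multiples of $p$, verifying the uniform bound $\phi(d)\ge p-1$ for all proper divisors $d$, and separating the subcases $p^2\mid n$ and $p^2\nmid n$, which govern both whether $\Gamma(A_{n/p})$ is complete and whether $A_{n/p}$ itself lies among the multiples of $p$. By comparison, the degree computation in (a) is routine once Lemma~\ref{struc} and Corollary~\ref{struc-coro} are in hand.
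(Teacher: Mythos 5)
The paper never proves this statement: it is imported verbatim from Ju and Wu \cite{ju} (their Theorem 3.2), so there is no internal proof to compare yours against. Judged on its own, your argument is correct, and it has the virtue of being self-contained within the paper's machinery (Lemma \ref{divisorgraph}, Lemma \ref{struc}, Corollary \ref{struc-coro}, Lemma \ref{UP}): the degree formula $\deg(x)=M_{d}$ or $M_{d}+\phi(n/d)-1$ for $x\in A_{d}$ is right, the bound $\phi(d)\ge p-1$ for every proper divisor $d$ (smallest prime factor of $d$ is at least $p$) is exactly what makes the vertex $p$ attain the minimum degree, and the connectivity lower bound is sound: a surviving $v\in A_{n/p}$ is adjacent to every other zero divisor divisible by $p$, and a vertex $y$ with $d=(y,n)$ not divisible by $p$ is joined to all of $A_{n/d}$, whose $\phi(d)\ge p-1>|S|$ elements are multiples of $p$, giving the two-step route $y\sim w\sim v$; the prime-power cases are handled correctly via $K_{p-1}$ (for $t=2$) and the $p-1$ universal vertices of $A_{p^{t-1}}$ (for $t\ge 3$). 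Two small points to tighten, neither a gap: your sentence ``$N_{\Upsilon_n}(p)=\{n/p\}$ and $M_p=\phi(p)=p-1$'' is only valid for $n\neq p^2$ (when $n=p^2$ the neighbourhood of $p$ in $\Upsilon_n$ is empty and $M_p=0$, which is what your stated value $\deg(p)=p-2$ actually uses, so the wording should separate the two cases); and in the steps concluding $\kappa\ge p-1$ you should record the trivial check that after deleting at most $p-2$ vertices more than one vertex remains, since the definition of $\kappa$ used in the paper also allows reduction to a single vertex.
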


We shall use Theorem \ref{ver-con} along with the following result of Krikland et al. \cite[Theorem 2.1]{krik} to characterize the values of $n$ for which vertex connectivity and algebraic connectivity of $\Gamma(\mathbb{Z}_{n})$ are equal.

\begin{theorem}\cite{krik} \label{alg-con-equal}
Let $G$ be a noncomplete connected graph on $m$ vertices. Then $\kappa(G)=\mu(G)$ if and only if $G$ can be written as $G_{1}\vee G_{2}$, where $G_{1}$ is a disconnected graph on $m-\kappa(G)$ vertices and $G_{2}$ is a graph on $\kappa(G)$ vertices with $\mu(G_{2}) \geq 2\kappa(G)-m$.
\end{theorem}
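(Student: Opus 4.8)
The plan is to prove the two implications separately; the reverse one is a short spectral computation while the forward one carries the weight. Throughout write $k=\kappa(G)$ and $\mu=\mu(G)$, and use two ingredients already available: first, the Laplacian spectrum of a join, which one reads off from Theorem \ref{join} (or from Theorem \ref{card} with base graph $K_2$): if $n_i=|V(G_i)|$ then
$$\sigma_L(G_1\vee G_2)=\{0,\;n_1+n_2\}\,\cup\,\{\,n_2+\alpha:\alpha\in\sigma_L(G_1)\setminus\{0\}\,\}\,\cup\,\{\,n_1+\beta:\beta\in\sigma_L(G_2)\setminus\{0\}\,\};$$
second, Fiedler's inequality $\mu(G)\le\kappa(G)$ for a noncomplete graph $G$, quoted in the Introduction.

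For the reverse implication, suppose $G=G_1\vee G_2$ with $G_1$ disconnected on $n_1=m-k$ vertices, $G_2$ on $n_2=k$ vertices, and $\mu(G_2)\ge 2k-m$. Since $G_1$ is disconnected, $0$ is a Laplacian eigenvalue of $G_1$ of multiplicity at least two, so $k=n_2+0$ occurs among the eigenvalues $n_2+\alpha$ (a genuine nonzero eigenvalue of $G$ since $k\ge 1$), while all remaining eigenvalues coming from $G_1$ are $\ge n_2=k$, those coming from $G_2$ are $\ge n_1+\mu(G_2)\ge n_1+(2k-m)=k$, and $n_1+n_2=m\ge k$. Hence $\mu(G)=k$. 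Also $V(G_2)$ is a vertex cut of $G$ (removing it leaves the disconnected graph $G_1$), so $\kappa(G)\le k$, and $G$ is noncomplete because $G_1$ has two non-adjacent vertices; Fiedler's bound then gives $k=\mu(G)\le\kappa(G)\le k$, so $\kappa(G)=\mu(G)$.

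For the forward implication, assume $G$ is noncomplete with $\kappa(G)=\mu(G)=k$. Pick a minimum vertex cut $S$ with $|S|=k$, and split the components of $G-S$ into two nonempty groups with vertex sets $A$ and $B$ (so there are no $A$--$B$ edges); put $a=|A|$, $b=|B|$, so $a,b\ge 1$ and $a+b=m-k$. Take the vector $x$ equal to $b$ on $A$, to $-a$ on $B$, and to $0$ on $S$; then $x\perp\textbf{1}$. Expanding $x^{T}L(G)x=\sum_{uv\in E(G)}(x_u-x_v)^2$, the only nonzero contributions come from $S$--$A$ edges (each giving $b^2$) and $S$--$B$ edges (each giving $a^2$), so $x^{T}L(G)x=e_{SA}b^2+e_{SB}a^2$ where $e_{SA},e_{SB}$ count those edges, while $x^{T}x=ab^2+a^2b=ab(m-k)$. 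Since each vertex of $A$ (resp.\ $B$) has at most $|S|=k$ neighbours in $S$, we have $e_{SA}\le ak$ and $e_{SB}\le bk$, whence the Rayleigh quotient $x^{T}L(G)x/x^{T}x\le k$; combined with $x^{T}L(G)x/x^{T}x\ge\mu(G)=k$ this forces equality, hence $e_{SA}=ak$ and $e_{SB}=bk$. Therefore every vertex of $S$ is adjacent to every vertex of $A\cup B=V(G)\setminus S$, i.e.\ $G=(G-S)\vee G[S]$. Set $G_1=G-S$ (disconnected, on $m-k$ vertices) and $G_2=G[S]$ (on $k$ vertices). Applying the join spectrum to this decomposition exactly as above, $\mu(G)=\min\{k,\,(m-k)+\mu(G_2)\}$ (when $|V(G_2)|\ge 2$), and $\mu(G)=k$ forces $(m-k)+\mu(G_2)\ge k$, i.e.\ $\mu(G_2)\ge 2k-m$, as required.

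The step I expect to be the main obstacle is the forward direction: the bound $\mu(G)\le\kappa(G)$ is classical, but one must choose the test vector $x$ so that its Rayleigh quotient is squeezed to be exactly $k$, and then extract from the resulting equality the precise structural conclusion that the cut $S$ is completely joined to the rest of $G$ — this hinges on the two edge-count inequalities $e_{SA}\le ak$, $e_{SB}\le bk$ being tight simultaneously. It remains only to dispatch the degenerate cases $k=1$ (a cut vertex adjacent to everything) and $|V(G_2)|=1$, where $\mu(G_2)$ is read under the usual convention; both are immediate by inspection.
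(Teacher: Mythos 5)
Your proposal is correct as a proof, but be aware that the paper itself gives no argument for this statement: Theorem \ref{alg-con-equal} is quoted from Kirkland, Molitierno, Neumann and Shader \cite{krik} and used as a black box, so there is no in-paper proof to compare against. Judged on its own, your two-step route works. For the backward implication, the join spectrum read off from Theorem \ref{join} (equivalently Theorem \ref{card} with base graph $K_2$) does show that every nonzero Laplacian eigenvalue of $G_1\vee G_2$ is at least $n_2=\kappa(G)$, with $n_2$ attained because the disconnected $G_1$ contributes an extra zero; note that once this is said, the appeal to Fiedler's bound $\mu\leq\kappa$ is redundant, since $\mu(G)=\kappa(G)$ already follows. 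For the forward implication, the Rayleigh-quotient squeeze with the test vector equal to $b$ on $A$, $-a$ on $B$ and $0$ on a minimum cut $S$ (using the variational characterization $\mu(G)=\min\{x^{T}L(G)x/x^{T}x:\;x\perp\textbf{1},\,x\neq 0\}$, which is standard though not stated in the paper) correctly forces $e_{SA}=ak$ and $e_{SB}=bk$, hence $G=(G-S)\vee G[S]$, and the inequality $\mu(G_2)\geq 2\kappa(G)-m$ then drops out of the join spectrum because $(m-k)+\mu(G_2)$ is a nonzero Laplacian eigenvalue of $G$. Two small points should be made explicit rather than left implicit: the notation $\sigma_L(G_i)\setminus\{0\}$ must be read as the multiset with exactly one copy of $0$ removed (this is the convention of Theorem \ref{card}, and your use of the surviving zero of $G_1$ depends on it), and in the degenerate case $\kappa(G)=1$ the graph $G_2=G[S]$ is a single vertex, where the inequality $\mu(G_2)\geq 2\kappa(G)-m$ holds under the convention $\mu(K_1)=0$ because $2\kappa(G)-m<0$ for a noncomplete connected $G$. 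With those conventions spelled out, your argument is a complete and self-contained proof of the cited result.
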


\begin{proposition}\label{equal-algcon-lapspec}
$\mu(\Gamma(\mathbb{Z}_{n}))=\kappa(\Gamma(\mathbb{Z}_{n}))$ if and only if $n$ is product of two distinct primes or $n=p^t$ for some prime $p$ and integer $t\geq 3$.
\end{proposition}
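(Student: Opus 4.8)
The plan is to combine Fiedler's inequality $\mu(G)\le\kappa(G)$ for a noncomplete graph $G$ with the equality criterion of Kirkland et al.\ (Theorem~\ref{alg-con-equal}), using Proposition~\ref{com-disconn} together with Theorem~\ref{spec-rad-equal} to detect exactly when $\Gamma(\mathbb{Z}_n)$ is a join of two graphs, and then reading $\mu$ off the Laplacian spectra already available. Before anything else I would dispose of the complete case: by Corollary~\ref{z-n-complete}, $\Gamma(\mathbb{Z}_n)$ is complete exactly when $n=p^2$, and then $\mu(\Gamma(\mathbb{Z}_{p^2}))=\vert\Gamma(\mathbb{Z}_{p^2})\vert=\kappa(\Gamma(\mathbb{Z}_{p^2}))+1$, so equality fails; this is consistent with the claim since $n=p^2$ (including $n=4$) is neither a product of two distinct primes nor of the form $p^t$ with $t\ge 3$. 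From now on assume $n\ne p^2$, so $\Gamma(\mathbb{Z}_n)$ is connected and noncomplete and Fiedler's inequality $\mu\le\kappa$ applies.

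For the ``only if'' direction I would argue by contraposition. Suppose $n$ is neither a product of two distinct primes nor a prime power; then $n$ has at least two distinct prime divisors and $n\ne pq$, so by Proposition~\ref{com-disconn} the complement $\overline{\Gamma(\mathbb{Z}_n)}$ is connected, whence $\Gamma(\mathbb{Z}_n)$ is not a join of two graphs by Theorem~\ref{spec-rad-equal}. If $\mu(\Gamma(\mathbb{Z}_n))=\kappa(\Gamma(\mathbb{Z}_n))$ held, Theorem~\ref{alg-con-equal} would express $\Gamma(\mathbb{Z}_n)$ as $G_1\vee G_2$, a contradiction. Combined with the complete case, this shows $\mu\ne\kappa$ whenever $n$ is not of the asserted form.

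For the ``if'' direction I would read off $\mu$ from the known spectra and compare with $\kappa$ from Theorem~\ref{ver-con}. If $n=pq$ with $p<q$, then by Example~\ref{lapspec-dis}(i) the Laplacian eigenvalues of $\Gamma(\mathbb{Z}_{pq})=K_{\phi(p),\phi(q)}$ are $0,\,p-1,\,q-1,\,p+q-2$ (with the stated multiplicities), so the smallest nonzero one is $\mu=p-1$, while $\kappa=p-1$ by Theorem~\ref{ver-con}(i); hence $\mu=\kappa$. If $n=p^t$ with $t\ge 3$, then by Theorem~\ref{lapspec-primepower}(ii)--(iii) the nonzero Laplacian eigenvalues of $\Gamma(\mathbb{Z}_{p^t})$ are exactly the integers $p^i-1$ for $1\le i\le t-1$, each occurring with positive multiplicity, so $\mu=p-1$, while $\kappa=p-1$ by Theorem~\ref{ver-con}(ii); hence $\mu=\kappa$. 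Equivalently, one may exhibit the relevant join decompositions and verify the Kirkland condition directly: $\Gamma(\mathbb{Z}_{pq})=\overline{K}_{\phi(q)}\vee\overline{K}_{\phi(p)}$ with $\overline{K}_{\phi(q)}$ disconnected and $\mu(\overline{K}_{\phi(p)})=0\ge 2\kappa-\vert\Gamma(\mathbb{Z}_{pq})\vert=p-q$; and, from the proof of Theorem~\ref{lapspec-primepower}, $\Gamma(\mathbb{Z}_{p^t})=K_{p-1}\vee R$ with $R$ disconnected on $p^{t-1}-p$ vertices, for which $\mu(K_{p-1})=p-1\ge 2(p-1)-(p^{t-1}-1)$ reduces to $p^{t-1}\ge p$, true for $t\ge 2$.

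The only point requiring genuine care is the bookkeeping in the ``if'' direction: one must confirm that $p-1$ really does appear as a Laplacian eigenvalue with positive multiplicity and is strictly below every other nonzero eigenvalue in each of the even and odd sub-cases of Theorem~\ref{lapspec-primepower}, and track the small cases $p=2$, $t=3$, and $n=4$ (where $\Gamma(\mathbb{Z}_4)$ is a single vertex), all of which are settled either by direct inspection of the spectra or by the complete-graph case above. None of these constitutes a serious obstacle; the argument is essentially an assembly of Theorems~\ref{spec-rad-equal}, \ref{ver-con}, \ref{alg-con-equal}, \ref{lapspec-primepower} and Proposition~\ref{com-disconn}.
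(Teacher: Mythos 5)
Your proposal is correct and follows essentially the same route as the paper: dispose of the complete case $n=p^2$ via Corollary \ref{z-n-complete}, rule out equality when $n$ is neither a prime power nor a product of two distinct primes by combining Proposition \ref{com-disconn} (connected complement, hence no join decomposition) with Theorem \ref{alg-con-equal}, and verify equality for $n=pq$ and $n=p^t$ ($t\ge 3$) by reading $\mu=p-1$ off Example \ref{lapspec-dis}(i) and Theorem \ref{lapspec-primepower} and matching it with $\kappa=p-1$ from Theorem \ref{ver-con}. The extra direct verification of the Kirkland condition via explicit join decompositions is a harmless addition not present in (and not needed by) the paper's argument.
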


\begin{proof}
We have $\mu(\Gamma(\mathbb{Z}_{n}))\leq \kappa(\Gamma(\mathbb{Z}_{n}))$ if and only if $\Gamma(\mathbb{Z}_{n})$ is not a complete graph, that is, if and only if $n$ is not the square of a prime by Corollary \ref{z-n-complete}.

If $n$ is not a product of two distinct primes nor a prime power, then $\overline{\Gamma(\mathbb{Z}_{n})}$ is connected by Proposition \ref{com-disconn} and so $\Gamma(\mathbb{Z}_{n})$ is not a join of two graphs.
Since $\Gamma(\mathbb{Z}_{n})$ is noncomplete and connected, Theorem \ref{alg-con-equal} implies that $\mu(\Gamma(\mathbb{Z}_{n}))< \kappa(\Gamma(\mathbb{Z}_{n}))$.

If $n=pq$ for some primes $p<q$, then $\kappa(\Gamma(\mathbb{Z}_{pq}))=p-1$ by Theorem \ref{ver-con}(i). From Example \ref{lapspec-dis}(i), we have $\mu(\Gamma(\mathbb{Z}_{pq}))=p-1$ and so $\mu(\Gamma(\mathbb{Z}_{pq}))=\kappa(\Gamma(\mathbb{Z}_{pq}))$.

If $n=p^t$ for some prime $p$ and positive integer $t\geq 3$, then $\mu(\Gamma(\mathbb{Z}_{p^t}))=p-1=\kappa(\Gamma(\mathbb{Z}_{p^t}))$ by Theorems \ref{lapspec-primepower}(ii), \ref{lapspec-primepower}(iii) and \ref{ver-con}(ii).
\end{proof}

\begin{theorem}\label{alg}
The following hold:
\begin{enumerate}[(i)]
\item If $n$ is not a prime power nor a product of two distinct primes, then $\mu(\Gamma(\mathbb{Z}_{n}))$ is the second smallest eigenvalue of $\textbf{L}(\Upsilon_n)$.
\item If $n$ is not a prime power, then $\lambda(\Gamma(\mathbb{Z}_{n}))$ is the largest eigenvalue of $\textbf{L}(\Upsilon_n)$.
\end{enumerate}
\end{theorem}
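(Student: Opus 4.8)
The plan is to compare the two parts of the Laplacian spectrum produced by Theorem \ref{main}: the \emph{inner} eigenvalues constituting $\sigma(\textbf{L}(\Upsilon_n))$ and the \emph{outer} eigenvalues of the form $M_{d_j}+\bigl(\sigma_L(\Gamma(A_{d_j}))\setminus\{0\}\bigr)$. Write $m=|\Gamma(\mathbb{Z}_n)|$. By Corollary \ref{struc-coro}(i), each $\Gamma(A_{d_j})$ is either $K_{\phi(n/d_j)}$ or $\overline{K}_{\phi(n/d_j)}$; since $\sigma_L(\overline{K}_r)\setminus\{0\}$ is empty while $\sigma_L(K_r)\setminus\{0\}$ consists of $r$ with multiplicity $r-1$, the outer eigenvalues are precisely the numbers $M_{d_j}+\phi(n/d_j)$ for those $j$ with $n\mid d_j^2$ and $\phi(n/d_j)\geq 2$. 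The crucial point is a degree identity: if $n\mid d_j^2$ and $v\in A_{d_j}$, then $v$ is adjacent to each of the remaining $\phi(n/d_j)-1$ vertices of $A_{d_j}$ and, by Lemma \ref{struc} and Corollary \ref{struc-coro}(ii), to all of $A_{d_i}$ exactly when $d_i\sim d_j$ in $\Upsilon_n$; hence
$$\deg_{\Gamma(\mathbb{Z}_n)}(v)=\bigl(\phi(n/d_j)-1\bigr)+M_{d_j},$$
so every outer eigenvalue equals $1+\deg_{\Gamma(\mathbb{Z}_n)}(v)$ for an appropriate vertex $v$ and therefore lies in the interval $[\,1+\delta(\Gamma(\mathbb{Z}_n)),\,1+\Delta(\Gamma(\mathbb{Z}_n))\,]$.

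Next I would observe that, since the Laplacian matrix is positive semidefinite, $\sigma(\textbf{L}(\Upsilon_n))\subseteq\sigma_L(\Gamma(\mathbb{Z}_n))\subseteq[0,\infty)$, while $\textbf{L}(\Upsilon_n)\textbf{1}=0$; thus $0$ is the smallest eigenvalue of $\textbf{L}(\Upsilon_n)$, and as every outer eigenvalue is positive, $0$ is the smallest Laplacian eigenvalue of $\Gamma(\mathbb{Z}_n)$ and it arises from $\textbf{L}(\Upsilon_n)$. Consequently $\mu(\Gamma(\mathbb{Z}_n))$ is the minimum of the second smallest eigenvalue of $\textbf{L}(\Upsilon_n)$ and of all the outer eigenvalues, and $\lambda(\Gamma(\mathbb{Z}_n))$ is the maximum of the largest eigenvalue of $\textbf{L}(\Upsilon_n)$ and of all the outer eigenvalues. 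Both assertions then reduce to showing that the outer eigenvalues do not attain these extrema.

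For (i): as $n$ is not a prime power, it is divisible by at least two distinct primes, so Theorem \ref{ver-con}(i) gives $\delta(\Gamma(\mathbb{Z}_n))=\kappa(\Gamma(\mathbb{Z}_n))=p-1$, where $p$ is the smallest prime divisor of $n$; moreover $\Gamma(\mathbb{Z}_n)$ is noncomplete by Corollary \ref{z-n-complete}, so Fiedler's inequality yields $\mu(\Gamma(\mathbb{Z}_n))\leq\kappa(\Gamma(\mathbb{Z}_n))=p-1$. By the interval above, every outer eigenvalue is at least $1+\delta(\Gamma(\mathbb{Z}_n))=p>p-1\geq\mu(\Gamma(\mathbb{Z}_n))$, so $\mu(\Gamma(\mathbb{Z}_n))$ is strictly below every outer eigenvalue and hence must equal the second smallest eigenvalue of $\textbf{L}(\Upsilon_n)$.

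For (ii): by Theorem \ref{spec-rad-maxd}, $\lambda(\Gamma(\mathbb{Z}_n))\geq 1+\Delta(\Gamma(\mathbb{Z}_n))$, which by the interval above is at least every outer eigenvalue; so it remains only to exclude the possibility that some outer eigenvalue equals $\lambda(\Gamma(\mathbb{Z}_n))$. If it did, then $\lambda(\Gamma(\mathbb{Z}_n))=1+\Delta(\Gamma(\mathbb{Z}_n))$, forcing $\Delta(\Gamma(\mathbb{Z}_n))=m-1$ by the equality case of Theorem \ref{spec-rad-maxd}, hence $\lambda(\Gamma(\mathbb{Z}_n))=m$; Proposition \ref{equal-m} would then make $n$ a prime power or a product of two distinct primes, so $n=pq$ for distinct primes $p$ and $q$. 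But neither proper divisor $p$ nor $q$ of $pq$ satisfies $n\mid d^2$, so $\Gamma(\mathbb{Z}_n)$ has no outer eigenvalues at all, contradicting our assumption. Therefore $\lambda(\Gamma(\mathbb{Z}_n))$ equals the largest eigenvalue of $\textbf{L}(\Upsilon_n)$. The one genuinely delicate step is this final borderline analysis in (ii); everything else is routine once the degree identity is in hand.
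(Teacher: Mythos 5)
Your decomposition of the spectrum misreads Theorem \ref{card}: in $M_{d_j}+\bigl(\sigma_L(\Gamma(A_{d_j}))\setminus\{0\}\bigr)$ the operation $\setminus\{0\}$ removes only \emph{one} copy of the eigenvalue $0$ (each part on $m_j$ vertices must contribute $m_j-1$ eigenvalues so that the total count is $\sum_j m_j$; compare Example \ref{lapspec-dis}(i), where the null part $\Gamma(A_p)=\overline{K}_{\phi(q)}$ contributes the eigenvalue $p-1$ with multiplicity $q-2$). Hence $\sigma_L(\overline{K}_r)\setminus\{0\}$ is not empty for $r\geq 2$: every null part $A_{d_j}$ with $\phi(n/d_j)\geq 2$ contributes the outer eigenvalue $M_{d_j}$, which equals $\deg(v)$ for $v\in A_{d_j}$, not $\deg(v)+1$. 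This breaks your proof of (i). Take $d_j=p$, the smallest prime divisor: since $n$ is not a prime power, $n\nmid p^2$, so $A_p$ is a null part, $\phi(n/p)\geq 2$, and $M_p=\phi(p)=p-1$; thus there is an outer eigenvalue equal exactly to $p-1$. Your chain ``every outer eigenvalue $\geq 1+\delta=p>p-1\geq\mu$'' therefore fails, and the non-strict Fiedler bound $\mu\leq\kappa=p-1$ no longer separates $\mu$ from the outer eigenvalues: the scenario $\mu=p-1$ attained only among the outer eigenvalues is not excluded by your argument. This is exactly why the paper invokes Proposition \ref{equal-algcon-lapspec} (which rests on Theorem \ref{alg-con-equal}) to obtain the \emph{strict} inequality $\mu(\Gamma(\mathbb{Z}_n))<\kappa(\Gamma(\mathbb{Z}_n))=p-1$ under the hypothesis of (i); combined with the bound that every outer eigenvalue is at least $p-1$, that strictness is what forces $\mu$ into $\sigma(\textbf{L}(\Upsilon_n))$.

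Part (ii) of your argument survives almost intact under the correct reading, since null-part outer eigenvalues equal $\deg(v)\leq\Delta<\Delta+1$ and complete-part ones equal $\deg(v)+1\leq\Delta+1$, so the reduction to the equality case of Theorem \ref{spec-rad-maxd} still works; but your closing contradiction is wrong as stated, because for $n=pq$ the graph \emph{does} have outer eigenvalues, namely $p-1$ and $q-1$ (with multiplicities $q-2$ and $p-2$). That step is easily repaired, since both are strictly less than $m=p+q-2=\lambda$; alternatively, argue as the paper does: when $n$ is neither a prime power nor a product of two distinct primes, $\overline{\Gamma(\mathbb{Z}_n)}$ is connected (Proposition \ref{com-disconn}), so $\lambda<m$, which with Theorem \ref{spec-rad-maxd} gives the strict inequality $\lambda>\Delta+1$, and the case $n=pq$ is read off from Example \ref{lapspec-dis}(i). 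So the essential gap is in (i), and it originates in the multiset interpretation of $\sigma_L(H_j)\setminus\{0\}$.
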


\begin{proof}
By Theorem \ref{main}, the Laplacian spectrum of $\Gamma(\mathbb{Z}_{n})$ is given by
\begin{eqnarray*}
\sigma _{L} \left(\Gamma\left(\mathbb{Z}_{n}\right)\right) = \underset{j=1}{\overset{k} \bigcup} \left(M_{d_{j}}+\left(\sigma _{L} \left(\Gamma\left(A_{d_{j}}\right)\right)\setminus \{0\}\right)\right)\bigcup  \sigma \left(\textbf{L}\left(\Upsilon_n\right)\right),
\end{eqnarray*}
where $d_{1},d_{2},\cdots ,d_{k}$ are the proper divisors of $n$ and $M_{d_j}$ is defined in (\ref{m-d-j}) for $1\leq j\leq k$.\\

(i) Let $p$ be the smallest prime divisor of $n$. Since $n$ is not a product of two distinct primes nor a prime power, Theorem \ref{ver-con}(i) and Proposition \ref{equal-algcon-lapspec} give that
\begin{equation}\label{lessthan-p-1}
\mu(\Gamma(\mathbb{Z}_{n}))< \kappa(\Gamma(\mathbb{Z}_{n}))=p-1,
\end{equation}
Let $\alpha$ be the minimum of the Laplacian eigenvalues of $\Gamma(\mathbb{Z}_{n})$ which are contained in
$$\underset{j=1}{\overset{k} \bigcup} \left(M_{d_{j}}+\left(\sigma _{L} \left(\Gamma\left(A_{d_{j}}\right)\right)\setminus \{0\}\right)\right).$$
Then
$$ \alpha=\min\left\{\mu\left(\Gamma\left(A_{d_{j}}\right)\right)+M_{d_{j}}: 1\leq j\leq k\right\},$$
where the minimum is taken over all $j$ for which $\Gamma(A_{d_{j}})$ is not a singleton.
The connectedness of $\Upsilon_n$ (Lemma \ref{UP}) implies that
$M_{d_{j}}\geq p-1$ for $1\leq j\leq k$ and hence $\alpha\geq p-1$. Then (\ref{lessthan-p-1}) implies that $\mu(\Gamma(\mathbb{Z}_{n}))$ must be an eigenvalue of $\textbf{L}(\Upsilon_n)$. Since $0$ is an eigenvalue of $\textbf{L}(\Upsilon_n)$, it follows that $\mu(\Gamma(\mathbb{Z}_{n}))$ is the second smallest eigenvalue of $\textbf{L}(\Upsilon_n)$. \\

(ii) If $n$ is a product of two distinct primes, then the result follows from Example \ref{lapspec-dis}(i). Assume that $n$ is not a prime power nor a product of two distinct primes. Then $\overline{\Gamma(\mathbb{Z}_{n})}$ is connected by Proposition \ref{com-disconn}. It follows from Theorems \ref{spec-rad-equal} and \ref{spec-rad-maxd} that
\begin{equation}\label{greater-than}
\lambda(\Gamma(\mathbb{Z}_{n}))> \Delta (\Gamma(\mathbb{Z}_{n}))+1,
\end{equation}
where $\Delta (\Gamma(\mathbb{Z}_{n}))$ is the maximal degree in $\Gamma(\mathbb{Z}_{n})$. Let $\beta$ be the maximum of the Laplacian eigenvalues of $\Gamma(\mathbb{Z}_{n})$ which are contained in $\underset{j=1}{\overset{k} \bigcup} \left(M_{d_{j}}+\left(\sigma _{L} \left(\Gamma\left(A_{d_{j}}\right)\right)\setminus \{0\}\right)\right)$. Then
$$ \beta=\max\left\{\lambda\left(\Gamma\left(A_{d_{j}}\right)\right)+M_{d_{j}}: 1\leq j\leq k\right\},$$
where the maximum is taken over all $j$ for which $\Gamma(A_{d_{j}})$ is not a singleton.

Let $v$ be a vertex of $\Gamma(\mathbb{Z}_{n})$. Then $v\in A_{d_{j}}$ for some $j\in\{1,2,\ldots, k\}$. By Corollary \ref{struc-coro}(i), $\Gamma\left(A_{d_j}\right)$ is   $K_{\phi\left(\frac{n}{d_j}\right)}$ or $\overline{K}_{\phi\left(\frac{n}{d_j}\right)}$.
If $\Gamma\left(A_{d_{j}}\right)=K_{\phi\left(\frac{n}{d_{j}}\right)}$, then $$deg(v)+1=M_{d_{j}}+\phi\left(\frac{n}{d_{j}}\right)=M_{d_{j}}+\lambda\left(\Gamma\left(A_{d_{j}}\right)\right).$$
If $\Gamma\left(A_{d_{j}}\right)=\overline{K}_{\phi\left(\frac{n}{d_{j}}\right)}$, then $$deg(v)+1=M_{d_{j}}+1>M_{d_{j}}=M_{d_{j}}+\lambda\left(\Gamma\left(A_{d_{j}}\right)\right).$$
Thus $\Delta \left(\Gamma\left(\mathbb{Z}_{n}\right)\right)+1=\max\{\deg(v)+1: v\in V\left(\Gamma\left(\mathbb{Z}_{n}\right)\right)\}\geq M_{d_{j}}+\lambda\left(\Gamma\left(A_{d_{j}}\right)\right)$ for $1\leq j\leq k$ and hence $\Delta \left(\Gamma\left(\mathbb{Z}_{n}\right)\right)+1\geq \beta$.
Then (\ref{greater-than}) gives that
$$\lambda(\Gamma(\mathbb{Z}_{n}))> \Delta (\Gamma(\mathbb{Z}_{n}))+1\geq \beta$$
and it follows that $\lambda(\Gamma(\mathbb{Z}_{n}))$ is the largest eigenvalue of $\textbf{L}(\Upsilon_n)$.
\end{proof}

From the proof of the above theorem, the following corollary follows.

\begin{corollary}
Let $d_{1},d_{2},\cdots ,d_{k}$ be the proper divisors of $n$. Then the following hold:
\begin{enumerate}[(i)]
\item If $n$ is not a prime power nor a product of two distinct primes, then $\mu(\Gamma(\mathbb{Z}_{n}))$ is not contained in $\left(M_{d_{j}}+\left(\sigma _{L} \left(\Gamma\left(A_{d_{j}}\right)\right)\setminus \{0\}\right)\right)$ for $1\leq i\leq k$.
\item If $n$ is not a prime power, then $\lambda(\Gamma(\mathbb{Z}_{n}))$ is not contained in $\left(M_{d_{j}}+\left(\sigma _{L} \left(\Gamma\left(A_{d_{j}}\right)\right)\setminus \{0\}\right)\right)$ for $1\leq i\leq k$.
\end{enumerate}
\end{corollary}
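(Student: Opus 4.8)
The plan is to read the corollary straight off the chain of strict inequalities that was already used inside the proof of Theorem~\ref{alg}. In each of the two statements of Theorem~\ref{alg} we did not merely identify $\mu(\Gamma(\mathbb{Z}_n))$ (resp.\ $\lambda(\Gamma(\mathbb{Z}_n))$) as an eigenvalue of $\textbf{L}(\Upsilon_n)$; along the way we bounded it strictly against the extremal element of the ``known'' block $\bigcup_{j=1}^{k}\bigl(M_{d_j}+(\sigma_L(\Gamma(A_{d_j}))\setminus\{0\})\bigr)$. Since that extremal element is either the minimum or the maximum of the whole block, the same strict inequality separates $\mu$ (resp.\ $\lambda$) from \emph{every} element of the block, which is exactly what the corollary asserts.

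For part (i), I would recall from the proof of Theorem~\ref{alg}(i) the quantity
$$\alpha=\min\left\{\mu\left(\Gamma\left(A_{d_j}\right)\right)+M_{d_j}:1\leq j\leq k,\ \Gamma\left(A_{d_j}\right)\ \text{not a singleton}\right\},$$
which, because $\mu(\overline{K}_m)=0$ and $\mu(K_m)=m$, is precisely the smallest element of $\bigcup_{j=1}^{k}\bigl(M_{d_j}+(\sigma_L(\Gamma(A_{d_j}))\setminus\{0\})\bigr)$. That proof showed $\alpha\geq p-1$ using the connectedness of $\Upsilon_n$ (Lemma~\ref{UP}), while Theorem~\ref{ver-con}(i) and Proposition~\ref{equal-algcon-lapspec} gave $\mu(\Gamma(\mathbb{Z}_n))<p-1$. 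Hence $\mu(\Gamma(\mathbb{Z}_n))<p-1\leq\alpha$, so $\mu(\Gamma(\mathbb{Z}_n))$ is strictly less than every element of that union and therefore lies in none of the sets $M_{d_j}+(\sigma_L(\Gamma(A_{d_j}))\setminus\{0\})$.

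For part (ii), I would split exactly as in Theorem~\ref{alg}(ii). If $n=pq$ for distinct primes $p<q$, then by Example~\ref{lapspec-dis}(i) the block $\bigcup_j\bigl(M_{d_j}+(\sigma_L(\Gamma(A_{d_j}))\setminus\{0\})\bigr)$ consists only of the values $p-1$ and $q-1$, whereas $\lambda(\Gamma(\mathbb{Z}_{pq}))=p+q-2$; since $p\geq 2$ both $p-1$ and $q-1$ are strictly smaller than $p+q-2$, so the claim holds. Otherwise $n$ is neither a prime power nor a product of two distinct primes, and the proof of Theorem~\ref{alg}(ii) established $\lambda(\Gamma(\mathbb{Z}_n))>\Delta(\Gamma(\mathbb{Z}_n))+1\geq\beta$, where
$$\beta=\max\left\{\lambda\left(\Gamma\left(A_{d_j}\right)\right)+M_{d_j}:1\leq j\leq k,\ \Gamma\left(A_{d_j}\right)\ \text{not a singleton}\right\}$$
is the largest element of the block (again because $\lambda(\overline{K}_m)=0$ and $\lambda(K_m)=m$). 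So $\lambda(\Gamma(\mathbb{Z}_n))$ exceeds every element of the block and hence is not contained in it.

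I do not expect a real obstacle: the substantive work is already inside the proof of Theorem~\ref{alg}, and the only step here is the bookkeeping observation that the extremal element of $\bigcup_j\bigl(M_{d_j}+(\sigma_L(\Gamma(A_{d_j}))\setminus\{0\})\bigr)$ controls the entire set. The one mild point of care is the degenerate case $n=pq$ in part (ii), which must be treated directly from the explicit spectrum in Example~\ref{lapspec-dis}(i) rather than through the maximal-degree bound, using only that $p$, being prime, satisfies $p\geq 2$.
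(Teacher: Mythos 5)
Your proposal is correct and follows essentially the same route as the paper, which simply states that the corollary "follows from the proof of the above theorem": you spell out the two strict inequalities $\mu(\Gamma(\mathbb{Z}_{n}))<p-1\leq\alpha$ and $\lambda(\Gamma(\mathbb{Z}_{n}))>\Delta(\Gamma(\mathbb{Z}_{n}))+1\geq\beta$ already established there, together with the observation that $\alpha$ and $\beta$ bound the whole block $\bigcup_{j}\bigl(M_{d_j}+(\sigma_L(\Gamma(A_{d_j}))\setminus\{0\})\bigr)$ from below and above. Your separate treatment of $n=pq$ in part (ii) via Example \ref{lapspec-dis}(i) matches how that case is handled in Theorem \ref{alg}(ii) itself, so no further comment is needed.
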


\noindent{\bf Addresses}:\\

\noindent 1) School of Mathematical Sciences,\\
National Institute of Science Education and Research (NISER), Bhubaneswar,\\
P.O.- Jatni, District- Khurda, Odisha - 752050, India\medskip

\noindent 2) Homi Bhabha National Institute (HBNI),\\
Training School Complex, Anushakti Nagar,\\
Mumbai - 400094, India\medskip

\noindent E-mails: sriparna@niser.ac.in, klpatra@niser.ac.in, bksahoo@niser.ac.in
\end{document}